\newcommand{\C}{\mathbb C}
\newcommand{\Z}{\mathbb Z}
\newcommand{\N}{\mathbb N}
\newcommand{\R}{\mathbb R}
\newcommand{\Q}{\mathbb Q}
\newcommand{\F}{\mathbb F}
\newcommand{\sma}{\left(\begin{array}}
\newcommand{\fma}{\end{array}\right)}
\newtheorem{lem}{Lemma}[section]
\newtheorem{defn}[lem]{Definition}
\newtheorem{co}[lem]{Corollary}
\newtheorem{thm}[lem]{Theorem}
\newtheorem{prop}[lem]{Proposition}
\newenvironment{proof}{\textbf{Proof.}}{\newline\hspace*{\fill}{$\Box$}\\}
\begin{document}
\title{Groups acting purely loxodromically on products of hyperbolic graphs}
\author{J.\,O.\,Button\\
Selwyn College\\
University of Cambridge\\
Cambridge CB3 9DQ\\
U.K.\\
\texttt{j.o.button@dpmms.cam.ac.uk}}
\date{}
\maketitle
\begin{abstract}
We consider the class of countable groups possessing an action on a finite
product of hyperbolic graphs where every infinite order element acts
loxodromically. When the graphs are locally finite, we obtain strong
structure theorems for the groups in this subclass, so that mapping
class groups of genus at least 3 (and $Aut(F_n)$ and $Out(F_n)$ for $n\geq 4$)
are not in this subclass. This contrasts with the general case, where 
Bestvina, Bromberg and Fujiwara showed the existence of proper actions 
of mapping class groups on a finite product of quasitrees. In particular
these quasitrees cannot be locally finite.
\end{abstract}

\section{Introduction}

It is a strong condition for a group $G$ to act properly and cocompactly
on either a hyperbolic or a CAT(0) space. We obtain the class of hyperbolic
groups from the former and the CAT(0) groups from the latter (though we
do not know that all hyperbolic groups are CAT(0)). As these two properties
do not pass to arbitrary subgroups (or even arbitrary finitely presented
subgroups), we might be tempted to remove the cocompactness requirement
to obtain a condition that is clearly inherited by subgroups and which we hope
might provide some information about the structure of $G$ as a group.

However this does not work. It is an open question whether every countable
group acts properly on some CAT(0) space, whereas every countable group
$G$ does act properly on some hyperbolic space. This can be seen from the
hyperbolic cone construction in \cite{grman}, which produces a
locally finite hyperbolic graph on which $G$ acts freely and properly
by automorphisms, and hence by isometries. As no information about $G$ can
possibly be obtained by the existence merely of a proper action on a hyperbolic
space, we would like to impose extra constraints on this action which are
useful but which fall far short of cocompactness.

When the space $X$ is CAT(0) there is such a notion, namely that of a group
$G$ acting properly and semisimply on $X$. As shown in \cite{bh}
III.$\Gamma$.1.1, we obtain strong information about $G$ in this case. First
we have a ``small subgroups'' condition, placing a restriction on the virtually
soluble subgroups of $G$ that can occur. Ideally this will say that all
subgroups of $G$ in a certain class (say virtually soluble, virtually
nilpotent or virtually polycyclic) are in fact virtually abelian and
finitely generated. Next we have a ``distortion'' condition saying that
all finitely generated abelian subgroups of $G$ are (if $G$ is itself
finitely generated) undistorted in $G$. Finally there is a
``central elements'' condition, saying that if an element $g\in G$ is
central and of infinite order then, up to finite index, the image of $g$ also
has infinite
order in the abelianisation $G/G'$. (If $G$ is finitely generated then this
is sometimes referred to as ``centralisers virtually split''.) This last
condition is especially useful for showing that some groups do not possess
such an action (for instance see \cite{bri}).

We would like a class of groups, closed under taking arbitrary subgroups
and also finite index supergroups, for which our three conditions above
hold in some appropriate form.
However we would like this class to be defined purely in terms of the
existence of a suitable action on a well behaved class of geometric spaces.
Moreover we will want it to be a wide ranging class, containing all
hyperbolic groups as well as other groups which are well behaved
geometrically such as $\Z^n$ for any $n$. At this stage, if we look back
at the hyperbolic cone construction we note that one feature
is that all infinite order elements act parabolically. Given that the
action of a parabolic element is not as nice geometrically as a
loxodromic element, one could consider the following class: all groups which
act properly and purely loxodromically (every infinite order element acts
loxodromically) on a hyperbolic space, or equivalently by the process of
``graphification'' on a hyperbolic (though not necessarily locally
finite) graph. This is a useful start but it will not contain $\Z^2$. (In fact,
it depends on which definition of a proper action is used, as is discussed
in the example from Section 3. Here though we are using the notion of a
metrically proper action, which $\Z^2$ will not possess.)

A good way round this is to consider proper isometric actions which are
purely loxodromic on a finite product $P$ of hyperbolic graphs
(here a loxodromic element is one whose orbits are quasi-isometrically embedded
once we have put the standard metric on the product). This certainly
contains all hyperbolic groups and $\Z^n$, is closed under taking subgroups
and direct products, and is also closed under taking
finite index supergroups by the technique of induction (if $H$ has
index $n$ in $G$ then this turns an action of $H$ on $P$ into an action
of $G$ on $P^n$ with the same properties). We can then ask: do groups with
such an action satisfy our three
conditions? Unfortunately it satisfies none of them. We will see this for
the first two in Section 7 but the centralisers property can be seen to
fail in a striking way because of the results of \cite{befu} and \cite{bebf}
which imply that the mapping class groups of closed orientable surfaces
lie in this class. In fact a stronger result is obtained: the hyperbolic
graphs are each quasi-isometric to a simplicial tree, and the
group quasi-isometrically embeds via the orbit map of the action in this
finite product of quasi-trees.

However we could ask whether the resulting graphs in this construction
are, or could be made, locally finite. We show that the answer is no and
that mapping class groups of genus 3 or above do not act properly and
purely loxodromically by isometries on any finite product of locally finite
hyperbolic graphs. As a result of Manning in \cite{man} states that
isometries of a quasitree are either elliptic or loxodromic, we deduce
in Section 9 that for any isometric action of the mapping class group of
genus at least three on a locally finite quasitree, all Dehn twists act
as elliptic elements. This also holds for any isometric action
on a bounded valence hyperbolic graph for the same reasons. Moreover we
obtain the same statements for $Aut(F_n)$ and $Out(F_n)$ when $n\geq 4$
(with Dehn twists replaced by Nielsen automorphisms). 

The structure of this paper is that Section 2 briefly mentions some standard
facts about the classification of elements acting isometrically on a
hyperbolic space (here this is a geodesic space, which need not
be a proper metric space, satisfying any of the
standard definitions of hyperbolicity). In Section 3 we review the
various notions of a proper action that do occur, so as to put in
context our definition of being metrically proper and purely loxodromic.
This sets us up for our class of groups considered
in the rest of the paper, namely those with such an action on a finite
product of locally finite hyperbolic graphs (where each factor graph has
the path metric and the $\ell_2$ metric is put on the resulting product).

However in Section 4 we need to examine a potential ambiguity. When
we talk about an isometric action on a finite product $P$ of metric graphs,
do we assume that the action is by automophisms of $P$, so that
it preserves the product graph structure, or should we take the view that
once the metric has been defined on $P$ then any isometry of $P$ is
allowed? We utilise the result in \cite{foly} to explain this
discrepancy, so that any group acting isometrically on $P$ is virtually
a group which preserves factor graphs and acts by graph automorphisms
on each factor, along with an isometric action on the de Rham factor $\R^n$.
This allows us to define our two classes of groups that we consider in detail:
we let $\mathcal M$  be the class acting properly and purely loxodromically
on a finite product of locally finite hyperbolic graphs where we allow
arbitrary isometries and $\mathcal M_0$ is the subclass where each isometry
has to be an automorphism of the product graph structure on $P$.

We show in Section 5 that both $\mathcal M$ and $\mathcal M_0$ have good
closure properties and that the requirement in the definition to act
properly can be ignored. 
However, when considering whether the three conditions
of ``small subgroups'', ``distortion'' and ``central elements'' hold for
all groups in $\mathcal M$ and $\mathcal M_0$, the difference in these two
classes really does matter. We show in Section 7 that all finitely
generated abelian subgroups of a finitely generated group $G_0$ in
$\mathcal M_0$ are undistorted, but point out counterexamples for
$\mathcal M$. As for small subgroups, one variation from the usual
statements along these lines is that asking for a purely loxodromic
action places no requirement on the finite order elements, so all torsion
groups lie in our two classes by default. With that in mind, the small
subgroups statement in Theorem \ref{solm} for $\mathcal M_0$
is as strong as it could be:
any virtually soluble and virtually torsion free subgroup of a group in
$\mathcal M_0$ is finitely generated and virtually abelian. Again this
does not hold in $\mathcal M$.

These statements are deduced from Section 6 where we examine actions
by automorphisms of groups with a loxodromic central element on a locally
finite hyperbolic graph. This allows us to build up a picture in Theorem
\ref{abl} of how the abelian subgroups of a group in $\mathcal M_0$ act.
However the main application of Section 6 is to examine the
``central elements''
condition which we show this time holds for $\mathcal M$ and not just for
$\mathcal M_0$. Specifically, Corollary \ref{mapc} states that if we
have an infinite order central element $g$ of a group $G$ in
$\mathcal M$
then there is a finite index subgroup $L$ of $G$ and $n>0$ such that
the element $g^n$ is in $L$ and has infinite order in the abelianisation
$L/L'$.

This is fine up to finite index, but we then look purely group theoretically
to show that dropping to a finite index subgroup is never needed.
Indeed Corollary \ref{infa} states: 
Suppose that a group $G$ has a finite index subgroup $H$ and an infinite
order element $h\in H$ which is central in $G$. If $h$ has infinite order
in the abelianisation of $H$ then $h$ has infinite order in the abelianisation
of $G$. This is useful for our applications in the remainder of this section
for the following reason. Say we wish to rule out mapping class groups
or $Aut(F_n)$ or $Out(F_n)$ lying in $\mathcal M$. It seems we can take
$G$ to be not the whole group but the centraliser of a Dehn twist/ Nielsen
automorphism $\gamma$, where we know that $\gamma$ has finite order
in the abelianisation of $G$ (by \cite{bri} and by \cite{rwad}). However
in order to apply \ref{mapc} directly, we would need to know what (a power
of) $\gamma$ does in the abelianisation of every finite index subgroup of
its centraliser $G$. This can be done in the case of mapping class groups,
as mentioned in Corollary \ref{mcg}, but our argument allows the immediate
application of the results in \cite{rwad} to show that
$Aut(F_n)$ and $Out(F_n)$ do not lie in $\mathcal M$, let alone $\mathcal M_0$,
when $n\geq 4$.

Finally in Section 9 we look at the case of when the action is on a product
of locally finite graphs which are either quasi-trees or of bounded valence.
Here, as we do not get parabolic elements in these actions, we can convert
our result about mapping class groups, 
$Aut(F_n)$ and $Out(F_n)$ failing to have purely loxodromic actions to the
stronger statement that all Dehn twists and Nielsen automorphisms must act
elliptically. By using our earlier work in Section 8 which covers the
virtual case, we can also say this holds for (powers of) Dehn twists and
Nielsen automorphisms in any finite index subgroup of these 
mapping class groups, as well as $Aut(F_n)$ and $Out(F_n)$ for $n\geq 4$.

\section{Preliminaries}

Suppose that an arbitrary group $G$ acts on an arbitrary metric space
$(X,d)$ by isometries. Given any $g\in G$, it is standard to say that
the element $g$ is {\bf elliptic} under the given action if
the subgroup $\langle g\rangle$ has a bounded orbit (equivalently
all orbits are bounded), which is always the case if $g$ has
finite order. It is also standard to say that 
the element $g$ is {\bf loxodromic} under the given action (though
this can also be called {\bf hyperbolic}) if
the subgroup $\langle g\rangle$ embeds quasi-isometrically in $X$
under the (or rather an) orbit map.
This condition can be expressed in various equivalent ways, but our
definition here will be:
\hfill\\
For a point $x_0\in X$, let the {\bf stable translation length} be
\[\tau(g):=\mbox{lim}_{n\rightarrow\infty} \frac{d(g^n(x_0),x_0)}{n}.\]
This limit always exists and is independent of $x_0$, so we say that
$g$ is loxodromic if $\tau(g)>0$. Indeed
$\tau(g)$ is always equal to the infimum of this sequence,
so that $g$ being loxodromic is equivalent to saying that
we have $c>0$ independent of $x\in X$ with $d(g^n(x),x)\geq nc$
for all $n\in\N$ (equivalently $d(g^n(x),x)\geq |n|c$
for all $n\in\Z$).

This leaves elements $g\in G$ which act with unbounded orbits but
where $\tau(g)=0$. At this level of generality, it
does not seem standard to call these {\bf parabolic} elements.
(For instance we can have an infinite cyclic group $\langle g\rangle$ acting 
isometrically on a metric space $X$ with $x_0\in X$ where
$\{g^n(x_0):n\in\Z\}$ is not bounded but with infinitely many $n\in\Z$
such that $d(x_0,g^n(x_0))=1$.)
We note here
though that an element $g\in G$ is elliptic/loxodromic/``parabolic'' if
and only if $g^n$ is (for some/all $n\in\Z\setminus \{0\}$) and if and
only if some conjugate of $g$ is.

However we do use the term parabolic when $X$ is a hyperbolic space: here
this will always mean a geodesic metric space, satisfying any of
the equivalent definitions of $\delta$ - hyperbolicity, but with no further
conditions such as properness assumed. In the hyperbolic
case we can look at the
action of $G$ by homeomorphisms (though not isometries)
on the boundary $\partial X$ of $X$ to obtain the limit
set $\partial_G X$ which is a subset of $\partial X$. This subset is
$G$-invariant and we have $\partial_H X\subseteq \partial_G X$ if $H$ is
a subgroup of $G$. In particular we can take an arbitrary isometry $g$
of some hyperbolic space $X$ and consider $\partial_{\langle g\rangle} X$.
We can summarise the facts we will need in the following well known proposition.
\begin{prop} \label{cyc}
(i) If $g$ is elliptic then $\partial_{\langle g\rangle} X=\emptyset$. This
is always the case if $g$ has finite order but might also occur for elements
with infinite order. In either case $g$ might fix many or no points on
$\partial X$.\\
(ii) If $g$ is loxodromic then $\partial_{\langle g\rangle} X$ consists
of exactly 2 points $\{g^\pm\}$ for any $x\in X$ 
and this is the fixed point set of $g$ (and $g^n$
for $n\neq 0$) on $\partial X$. Moreover for any $x\in X$ we have
$g^n(x)\rightarrow g^+$ and $g^{-n}(x)\rightarrow g^-$ in $X\cup\partial X$.\\
(iii) If $g$ is parabolic then $\partial_{\langle g\rangle} X$ consists
of exactly 1 point and again this is the fixed point set of $g$ (and $g^n$
for $n\neq 0$) on $\partial X$.\\
\end{prop}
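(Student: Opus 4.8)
The plan is to control everything through the growth of the displacement function $\phi(n):=d(x_0,g^n(x_0))$ and the Gromov products $(g^n(x_0)\mid g^m(x_0))_{x_0}$, using the $\delta$-inequality in place of the compactness arguments that are unavailable since $X$ is not assumed proper. Throughout, $\partial_{\langle g\rangle}X$ is the set of accumulation points on $\partial X$ of a single orbit $\{g^n(x_0):n\in\Z\}$; that this is independent of $x_0$ and that $g$ extends to a homeomorphism of $X\cup\partial X$ are standard and I would quote them. Note also that $\phi$ is subadditive ($\phi(n+m)\le\phi(n)+\phi(m)$) and symmetric ($\phi(-n)=\phi(n)$), and that $d(g^n(x_0),g^m(x_0))=\phi(m-n)$.

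For (i), if $g$ is elliptic then $\{\phi(n)\}$ is bounded, so no subsequence of the orbit can have its mutual Gromov products tend to infinity; hence no subsequence converges to a boundary point and $\partial_{\langle g\rangle}X=\emptyset$. The accompanying remarks (infinite order elliptics exist, and the set of points of $\partial X$ fixed by an elliptic is unconstrained) I would settle by exhibiting small examples rather than by a general argument.

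For (ii), the key observation is that $\tau(g)>0$ makes the bi-infinite orbit a quasi-geodesic essentially for free: the infimum characterisation of $\tau$ gives $\phi(k)\ge k\tau(g)$, while the triangle inequality gives $\phi(k)\le k\phi(1)$, so $n\mapsto g^n(x_0)$ is a quasi-isometric embedding of $\Z$. Feeding this quasi-geodesic into the stability (Morse) lemma for geodesic $\delta$-hyperbolic spaces, which does not require properness, I would conclude that the forward and backward rays converge to boundary points $g^+$ and $g^-$, that $(g^n(x_0)\mid g^{-m}(x_0))_{x_0}$ stays bounded so that $g^+\neq g^-$, and that $g^n(x_0)\to g^+$, $g^{-n}(x_0)\to g^-$. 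Applying the shift $g\cdot g^n(x_0)=g^{n+1}(x_0)$ together with continuity on the boundary shows $g$ fixes $g^\pm$. That these are the only fixed points I would get from the north--south dynamics of a loxodromic isometry on $\partial X$ (any fixed $\xi\neq g^-$ is forced to equal $g^+$ under forward iteration); the same two points serve for every $g^n$, $n\neq 0$, since $g^n$ is loxodromic with $\tau(g^n)=|n|\tau(g)$ and the same orbit endpoints, while $\mathrm{Fix}(g)\subseteq\mathrm{Fix}(g^n)$.

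Part (iii) is where I expect the real work, precisely because non-properness forbids extracting convergent subsequences by compactness and because the introduction's example shows one must use hyperbolicity to exclude an unbounded orbit that repeatedly returns near $x_0$. First I would show, using the $\delta$-inequality and $\tau(g)=0$, that $(g^n(x_0)\mid g^m(x_0))_{x_0}\to\infty$ as $n,m\to+\infty$; this simultaneously yields a nonempty limit set and forward convergence to a single point $\xi$, and likewise a single backward limit $\xi'$. To identify $\xi=\xi'$ I would argue by contradiction: if $\xi\neq\xi'$ then $(g^n(x_0)\mid g^{-m}(x_0))_{x_0}$ is bounded, i.e. $\phi(n+m)\ge\phi(n)+\phi(m)-C$; this makes $\phi$ superadditive up to the constant $C$, and Fekete's lemma with $\lim\phi(n)/n=\tau(g)=0$ then forces $\phi$ to be bounded, contradicting that $g$ is not elliptic. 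Hence $\partial_{\langle g\rangle}X$ is the single point $\xi$, it is $g$-fixed by the shift argument, and it is the whole fixed-point set of $g$ and of each $g^n$ (two boundary fixed points would put us in the loxodromic case). The main obstacle is thus the first step of (iii): establishing, hyperbolically and without compactness, that the forward orbit genuinely converges to infinity.
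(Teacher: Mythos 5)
You are proving a statement the paper itself never proves: Proposition \ref{cyc} is stated as a summary of ``well known'' facts, with no argument or citation given, so the only question is whether your blind attempt is complete and correct. Parts (i) and (ii) are fine: the bounded-Gromov-product argument for (i) is immediate, and in (ii) the chain $\phi(k)\geq k\tau(g)$, $\phi(k)\leq k\phi(1)$, Morse stability (which indeed needs no properness), boundedness of $(g^n x_0\mid g^{-m}x_0)_{x_0}$ via the Morse lemma, and the attraction argument for uniqueness of fixed points are all correct and are the standard route. Your Fekete argument in (iii) ruling out two distinct limit points is also correct and rather elegant: distinct forward and backward limits would bound $(g^nx_0\mid g^{-m}x_0)_{x_0}$, hence make $\phi(n)=d(x_0,g^nx_0)$ superadditive up to a constant, and superadditivity plus $\lim\phi(n)/n=\tau(g)=0$ forces $\phi$ bounded, contradicting parabolicity.

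The genuine gap is exactly the step you flag and then leave unproved: that for an isometry with unbounded orbits and $\tau(g)=0$ on a possibly non-proper geodesic $\delta$-hyperbolic space, one has $(g^nx_0\mid g^mx_0)_{x_0}\to\infty$ as $n,m\to+\infty$. This is not a routine consequence of the $\delta$-inequality; it is the substantive content of the classification of isometries in the non-proper setting. Two things must be ruled out, and neither follows from what you have written. First, in a non-proper hyperbolic space an unbounded sequence can have \emph{empty} limit set (take the wedge of rays of length $k$, $k\in\N$: the tips go to infinity but all mutual Gromov products at the wedge point vanish), so unboundedness of the orbit by itself produces no boundary point at all. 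Second, one must exclude the orbit returning within bounded distance of $x_0$ infinitely often while still being unbounded --- precisely the behaviour in the paper's Section 2 parenthetical example for general metric spaces --- since convergence to a boundary point forces $d(x_0,g^nx_0)\to\infty$. Applying the four-point condition to orbit points only yields approximate convexity and approximate symmetry statements for $n\mapsto\phi(n)$ with additive errors of order $\delta$ per step, which accumulate linearly and give nothing over long intervals. The known proofs use a genuinely different input: for example the local criterion that $g$ is loxodromic as soon as $d(x,gx)\geq 2(gx\mid g^{-1}x)_x+6\delta$ for some $x$ (whose contrapositive controls the shape of a non-loxodromic orbit), or compactness of the horofunction compactification as a substitute for properness, as in Gromov 8.1.B, Coornaert--Delzant--Papadopoulos Chapter 9, Caprace--Cornulier--Monod--Tessera (\emph{Amenable hyperbolic groups}, Section 3), or Das--Simmons--Urba\'nski Chapter 6. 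To complete your write-up you must either import such an argument or cite these sources; given that the paper treats the whole proposition as folklore, citation would be legitimate, but as a proof your part (iii) is incomplete at its crux.

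A smaller repair: in (iii) you justify uniqueness of the fixed point by saying ``two boundary fixed points would put us in the loxodromic case''. That is false as stated --- elliptic isometries, including the identity, can fix many boundary points, as you yourself note in (i). The correct argument mirrors your north--south reasoning in (ii): once both $g^{\pm n}x_0\to\xi$ and $\phi(n)\to\infty$ are known, one shows $(g^n\eta\mid\xi)_{x_0}\to\infty$ for every $\eta\neq\xi$, so every fixed point of $g^n$ ($n\neq 0$) equals $\xi$; alternatively, an argument via a geodesic line joining two fixed boundary points runs into the problem that such a line need not exist when $X$ is not proper.
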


\section{Various notions of proper actions}
Having considered individual isometries, we now 
let $G$ be any group acting on a metric space $X$ by isometries and
we will consider various notions of what it means to say that $G$ acts
properly on $X$.
We will generally assume that $X$ is a geodesic metric space, but place
no further restrictions on $X$ at the moment.

A common notion of a proper action, which we will refer to as
{\bf topologically proper} is that for every compact
subset $C$ of $X$, the set of elements $\{g\in G: g(C)\cap C\neq\emptyset\}$ 
is finite. Alternatively 
we say that $G$ acts {\bf metrically properly} if the above holds
on replacing compact with closed and bounded.
For general metric spaces
acting metrically properly, which can
also be expressed as: for any $x\in X$ and for any $R>0$, the set
$\{g\in G: d(g(x),x)\leq R\}$ is finite, is often considerably
stronger than acting topologically properly. However our particular
results will be for proper metric spaces (meaning that all closed balls are
compact) where these two notions are clearly equivalent.

We also have the concept of a geometric action, which we can think of
as a stronger version of a proper action. Here
we define {\bf cocompact} to mean that there
is a compact subset $C$ whose translates by the group $G$ under the given
action cover all of $X$, and {\bf cobounded} to mean the same but for
a bounded subset $B$. Clearly being cocompact is a stronger notion than
being cobounded in general but is the same in a proper metric space.
We will use the common definition of a {\bf geometric} action, that is
an isometric action which is both topologically
proper and cocompact. However it can sometimes be used to mean that the action
is metrically proper and cobounded. In fact if $X$ is a geodesic metric
space then our notion of a geometric action implies this one,
because $X$ will then be a proper metric space
(see \cite{bh} I.8.4 Exercise 1) and the two definitions are the same anyway
if $X$ is proper.
(If though we remove the geodesic condition from $X$, an infinite group
equipped with the discrete metric
and acting on itself by left multiplication would provide a topologically
proper and cocompact action which is not metrically proper and cobounded.)
Of course these types of geometric actions are important for finitely
generated groups because
the Svarc - Milnor lemma tells us that $G$ and $X$ are quasi-isometric
via the (or a) {\bf orbit map}, namely on equipping $G$ with the word
metric with respect to some finite generating set and
taking any base point $x_0\in X$, we send $g\in G$ to $g(x_0)\in X$.
We present this famous result in the following general form.
\begin{prop} \label{sm}
Let $G$ be an arbitrary group acting on some geodesic metric
space $X$ by isometries. If this action is metrically proper and
cobounded then $G$ is finitely generated and the orbit map is a quasi-isometry
from $G$ to $X$ (where $G$ has the word metric obtained from any finite
generating set).

Conversely if a finitely generated group $G$ acts on $X$ by isometries and
the orbit map is a quasi-isometry then this action is metrically proper
and cobounded (in this direction $X$ can be an arbitrary metric space, not
necessarily geodesic).
\end{prop}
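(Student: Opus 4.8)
The plan is to prove the two directions separately, with the geodesic hypothesis doing the essential work in the forward direction. Throughout I fix a basepoint $x_0\in X$ and write the orbit map as $g\mapsto g(x_0)$.

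For the forward direction, coboundedness gives a bounded set whose $G$-translates cover $X$; since it is bounded I may enlarge it to a closed ball $\overline{B}(x_0,R)$ for some $R>0$, whose translates still cover $X$. I then set $S=\{g\in G:d(x_0,g(x_0))\leq 2R+1\}$, which is finite by metric properness. The key step is to show simultaneously that $S$ generates $G$ and that it controls word length. Given $g\in G$, I join $x_0$ to $g(x_0)$ by a geodesic (this is the one and only place the geodesic hypothesis is used) and subdivide it into $N\leq d(x_0,g(x_0))+1$ subarcs of length at most $1$, with successive division points $p_0=x_0,p_1,\dots,p_N=g(x_0)$. Coboundedness supplies $g_i\in G$ with $d(p_i,g_i(x_0))\leq R$, and I may take $g_0=e$ and $g_N=g$. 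Using that the action is by isometries, $d(x_0,g_i^{-1}g_{i+1}(x_0))=d(g_i(x_0),g_{i+1}(x_0))\leq R+1+R\leq 2R+1$, so each $g_i^{-1}g_{i+1}\in S$, and telescoping expresses $g$ as a product of $N$ elements of $S$. Hence $G=\langle S\rangle$ is finitely generated and $\|g\|_S\leq d(x_0,g(x_0))+1$.

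This last inequality is exactly the lower quasi-isometry bound for the orbit map. The upper bound $d(x_0,g(x_0))\leq \|g\|_S\cdot\max_{s\in S}d(x_0,s(x_0))$ follows from the triangle inequality applied to a geodesic word for $g$ together with the fact that the action is isometric, and coarse density of the orbit is immediate from coboundedness; so the orbit map is a quasi-isometry. For the converse, where $X$ need not be geodesic, coboundedness follows directly from coarse surjectivity of a quasi-isometry. For metric properness, the quasi-isometry inequality yields constants $\lambda\geq 1$ and $\epsilon\geq 0$ with $\|g\|_S\leq\lambda\,d(x_0,g(x_0))+\epsilon$, so $d(x_0,g(x_0))\leq R$ forces $\|g\|_S\leq\lambda R+\epsilon$, and only finitely many elements of a finitely generated group have bounded word length. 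Checking the metrically proper condition at a general point $x$ rather than at $x_0$ changes the relevant distances only by the bounded amount $2d(x,x_0)$, via the triangle inequality, so the same finiteness holds there.

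The main obstacle is the forward generation-and-length estimate: one must arrange the finite set $S$, the subdivision scale, and the coboundedness radius $R$ to interlock so that each connecting element $g_i^{-1}g_{i+1}$ lands in $S$, and it is precisely here that the existence of an honest geodesic, rather than merely a coarse path, is indispensable. Once that estimate is in place, both directions reduce to routine applications of the triangle inequality and the finiteness of word-metric balls in a finitely generated group.
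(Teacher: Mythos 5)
Your proof is correct: the forward direction is the standard \v{S}varc--Milnor argument (cover $X$ by translates of a ball, take $S$ to be the finite set of elements moving $x_0$ at most $2R+1$, subdivide a geodesic to show $S$ generates and bounds word length), and the converse is the routine finiteness-of-word-balls argument, with the check at a general basepoint handled correctly. The paper itself states Proposition \ref{sm} without proof, quoting it as the famous \v{S}varc--Milnor lemma, so there is no argument in the paper to compare against; your write-up supplies exactly the standard proof that the paper implicitly relies on.
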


Now it might be that there exists a quasi-isometry between
$G$ and some other metric space $X$ which is not obtained from an action
of $G$ on $X$. (For instance, the free product $C_5*C_5$ is quasi-isometric to
the regular tree of valence 4 but any action of $C_5*C_5$ by isometries on
this tree must be the identity, so that every orbit map is constant.)
However in this paper all quasi-isometries (and quasi-isometric embeddings)
come from an orbit map obtained by an action of a group on a space by
isometries. Therefore we make the following definition:
\begin{defn} An action of a finitely generated
group $G$ on a geodesic metric space $X$ 
is said to be {\bf quasi-isometric} (or QI for short)
if it is metrically proper and cobounded.
\end{defn}

We also define the following notion, which is weaker than a quasi-isometric
action but still pretty strong.
\begin{defn}
An action of a finitely generated group $G$ on a geodesic metric space $X$ 
is said to be a {\bf quasi-isometric embedding} (or a QIE action for short)
if the orbit map is a quasi-isometric embedding,
namely if $||g||$ is the word length of $g\in G$ under our chosen finite
generating set and $x_0\in X$, we have $K\geq 1,C\geq 0$ such that
\[\frac{1}{K}||g||-C\leq d(g(x_0),x_0)\leq K||g||+C.\]
\end{defn}
The above will hold regardless of generating set or basepoint $x_0$
but with possibly different constants. Moreover the second inequality
holds for any isometric action.
Of course here the difference with a QI action is that we might not now
have all points of $X$
within bounded distance of the orbit of $x_0$ under $G$.

We now have five notions of a ``proper/geometric'' action, ranging in strength
from geometric to 
QI to QIE to metrically proper to topologically proper.
(QIE implies metrically proper because there are only finitely
many elements in $G$ with word length at most a given value.)
Thus we should ask: which version of proper should we use in order to
investigate properties of various groups?
Although acting topologically properly clearly has good subgroup closure
properties, it is generally too weak a notion to act as a discriminator for
finitely generated groups, at least if we are allowing actions on spaces
that are not proper. To illustrate this, note that if $\Gamma$ is a
graph (all graphs are assumed to be metric spaces via the path metric
with unit edge lengths)
and a group $G$ acts by graph automorphisms, hence isometries, on
$\Gamma$ then acting topologically properly is equivalent to all vertex
stabilisers being finite (for instance, see \cite{metr} Proposition
4.1 which is a proof for (finite products of)
simplicial trees but which goes through exactly for
(finite products of) arbitrary graphs). Therefore any
infinite group $G$ acts topologically properly by left multiplication
(and indeed coboundedly, but not cocompactly) on the complete graph with
vertex set $G$. As this graph is bounded, it is even hyperbolic. Moreover
note that all elements will act elliptically, whereas we would expect that
under most definitions of a proper/geometric
action, an infinite order element should
not have a bounded orbit and indeed this holds for the four other definitions.

So it seems as if we should use metrically proper as our correct definition,
but every finitely generated group acts metrically properly and freely on a
locally finite hyperbolic graph: namely its hyperbolic cone (as shown in
\cite{grman}; see \cite{osac} Section 3 for a summary of this construction).
Thus even for such nice spaces this notion
fails to distinguish any groups. On the other hand, the class of groups which
act geometrically or indeed QI on some hyperbolic space $X$ are of course
just the word hyperbolic groups by Svarc - Milnor.
Moreover the same is true of QIE actions
(say by \cite{bh} III.H.1.9 as the Cayley graph embeds quasi-isometrically
in $X$).
So in these two cases our permitted groups vary wildly between everything
and a very restricted class. However, in the
example of the hyperbolic cone over a finitely generated group $G$, we are
able to make the action metrically proper at the expense of every infinite
order element acting parabolically. With this in mind we will be looking at
actions where every infinite cyclic subgroup is quasi-isometrically
embedded, rather than the whole group in the case of a QIE action.
Note that a necessary condition for a finitely generated
group $G$ to have such an action on some metric space is that
all infinite cyclic subgroups are undistorted in $G$. Moreover
if all infinite cyclic subgroups of $G$ are undistorted then the action
of $G$ on its Cayley graph is such an action on a proper geodesic
metric space.

The class of finitely generated groups with undistorted infinite cyclic
subgroups is wide and might be thought of as a reasonable initial
definition of groups that are not badly behaved geometrically. Moreover
this class is preserved under taking finitely generated subgroups and
if $G$ is in this class then a QIE action of $G$ on some metric space $X$
implies that the action is {\bf purely loxodromic}; that is
every infinite order element acts loxodromically.
So in line with the examples above, we can move from specific actions to
groups possessing such an action on a suitable space and ask: what is the
class of groups admitting a proper isometric and purely loxodromic action
on some hyperbolic space, or some hyperbolic graph, or even some locally finite
hyperbolic graph? These classes clearly
include all hyperbolic groups and (using the hyperbolic cone)
all countable torsion groups,
but also all subgroups of hyperbolic groups, thus they are somewhat wider
than the class of groups admitting a QIE action on a hyperbolic space which
was just the hyperbolic groups. However the difference between requiring
or not requiring that the action is also metrically proper
really matters as soon as we consider $\Z\times\Z$.\\
\hfill\\
{\bf Example}:
The action of $\Z\times\Z$ on $\R$ via $x\mapsto x+1,x\mapsto
x+\sqrt{2}$ is purely loxodromic and indeed free,
but this is surely not what would
be regarded as a proper action in any sense. Indeed
there is no metrically proper action of 
$\Z\times\Z$ on a hyperbolic space in which every
non-identity element acts loxodromically (for instance see Proposition
\ref{zprop}).
However we can form
a topologically proper and isometric action of
$\Z\times\Z$ on a hyperbolic graph which is also
purely loxodromic: namely take the above action on $\R$
(or even on $\Z\oplus\Z[\sqrt 2]$) and let the vertex set of $\Gamma$
be the points of this space, with an edge joining vertices if they are
at most distance 1 away. Then $\Z\times\Z$ acts by automorphisms on
$\Gamma$, and thus by isometries once we equip $\Gamma$ with the
standard path metric. This ``graphification'' process also preserves
the elliptic/loxodromic/``parabolic'' classification of the original
action of $G$. Moreover
$\Gamma$ is quasi-isometric to our original
space, namely $\R$, and the action is still free which means that it is
also topologically proper (by \cite{metr} Proposition 4.1).

Now we have agreed that the notion of acting topologically properly
is too weak in general and therefore a proper action will mean
metrically proper from now on. But we certainly do not wish to discard
well behaved groups just because they contain $\Z^2$.
Thus in order to obtain a suitably wide ranging class of groups with
well behaved actions which includes $\Z^2$ (and indeed
$\Z^n$ for any $n$) as well as all hyperbolic groups and their
subgroups, we now look at
groups acting in this way on finite products of hyperbolic spaces, where
we can take these spaces to be graphs which are hyperbolic with respect to
the path metric.
We can then distinguish between the case where all graphs are locally finite
and the case where arbitrary hyperbolic graphs are allowed. We will see that
groups possessing such an action in the locally finite case are much better
behaved overall than in the general case.

\section{Groups acting on products of graphs}

It is clear that if $\Gamma$ is a connected graph which is given
the standard path metric then the isometry group of $\Gamma$ is precisely
the group $Aut(\Gamma)$ of graph automorphisms of $\Gamma$, with the one
exception where $\Gamma$ is the simplicial line $L$, in which case we have
$Isom(L)=Isom(\R)$ and $Aut(L)=Isom(\Z)$. Let us now take a
finite number of connected
graphs $X=\Gamma_1\times \ldots \times \Gamma_k$, each
equipped with their own path metric $d_i$, and turn $X$ into a metric space
using the $l_2$ metric. Hence for ${\bf x}=(x_1,\ldots ,x_k)$ and
${\bf y}=(y_1,\ldots ,y_k)\in X$ we have
\[d({\bf x},{\bf y})^2=\sum_{i=1}^kd_i(x_i,y_i)^2.\]
Consequently $X$ is a geodesic metric space because each factor is.
However we can also use the graph structure on each $\Gamma_i$ to turn
$X$ into a $k$ dimensional cube complex, where a vertex of $X$ is
$(v_1,\ldots ,v_k)$ for $v_i$ a vertex of $\Gamma_i$, edges in $X$ join  
$(v_1,\ldots,v_i,\ldots,v_k)$ to $(v_1,\ldots,w_i,\ldots,v_k)$ for some $i$,
where $v_i-w_i$ is an edge in the graph $\Gamma_i$, and so on. As we are using
the $l_2$ metric for our product, this metric on $X$ is the same as
putting the standard Euclidean metric on each of the $i$ dimensional cubes
and then forming the path metric using the cube complex structure. This
means that any automorphism of $X$ with the cube complex structure will
be an isometry. One way to obtain such isometries is as follows.
\begin{defn} If $X=\Gamma_1\times \ldots \times\Gamma_k$ then we say that a
group $G$ acts {\it preserving factors} if $G$ acts separately by graph
automorphisms on each factor $\Gamma_i$.
\end{defn}
It is then clear that $G$ will both preserve this
cube complex structure and will act by isometries on $X$, but the
advantage here is that we also have a homomorphism of $G$ to
$Aut(\Gamma_1)\times \ldots \times Aut(\Gamma_k)$ and thus to
each $Aut(\Gamma_i)$ separately by projection. In particular any element
$g\in G$ can be written $g=(g_1,\dots ,g_k)$ for $g_i\in Aut(\Gamma_i)$
and we can then ask whether each $g_i$ is an elliptic, ``parabolic'' or
loxodromic isometry with respect to its action on $\Gamma_i$.
This means that, although $X$ itself is almost never a hyperbolic space
even if the factors are, in
the case where $G$ acts on $X$ preserving factors, we can relate the type
of $g$ as an isometry of $X$ to the type of its action on the factors
(see Proposition \ref{type} below).

However $G$ could still act as a group of automorphisms of the cube complex
and thus by isometries of $X$ if it permutes some of the graphs.
Of course in this case it can only permute graphs which are isomorphic, but
in any event we will always have a finite index subgroup $H$ of $G$
which preserves factors and in many cases it is enough to work with $H$
instead of $G$.

We now need to consider the reverse process. As all isometries of $X$
are potentially of interest to us, we must ask whether every isometry
of $X$ is an automorphism of the cube complex structure (in which
case some power will preserve factors)? We can see that
the answer is no in general, just as in the case for a single graph: if
we take $k$ copies of the simplicial line $L$ then
$L^k$ with its natural cube complex structure has an automorphism group
which is virtually a copy of $\Z^k$, whereas $Isom(L^k)=Isom(\R^k)$ is
a much bigger group. This is a classical phenomenon in Riemannian geometry
dealt with by the de Rham theorem, but the wide ranging generalisations of
this in \cite{foly} allow us to apply their results directly to our case to
see that this is the only exception. In particular from that paper we have
the following, where a metric space $X$ is said to be irreducible if whenever
we have $X=Y\times Z$ for $Y,Z$ metric spaces under
the $\ell_2$
product metric then either $Y$ or $Z$ is a point. 
\begin{thm} \label{prod} (\cite{foly} Theorem 1.1 and Corollary 1.3)
Let $X$ be a geodesic metric space of finite affine rank. Then $X$ admits
a unique decomposition as a metric direct product
\[X=Y_0\times Y_1\times\ldots \times Y_n\]
where $Y_0$ is an isometric Euclidean copy of $\R^m$ (or a point) and each 
$Y_i$ is an irreducible metric space not isometric to $\R$ or a point.

In particular, if $\cal P$ is the group of all permutations
$\sigma\in \Sigma_n$ such that $Y_i$ and $Y_{\sigma_i}$ are isometric
for each $1,\ldots ,n$ then we have the  natural exact sequence
\[1\rightarrow Isom(Y_0)\times Isom(Y_1)\times \ldots \times Isom(Y_n)
\rightarrow Isom(X)\rightarrow{\mathcal P}\rightarrow 1.\]
\end{thm}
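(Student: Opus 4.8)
The plan is to follow the strategy of the metric de Rham decomposition, isolating first the flat Euclidean factor and then splitting the remainder into irreducibles, with the exact sequence falling out of the uniqueness of the whole decomposition.

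First I would extract the Euclidean factor $Y_0=\R^m$. The natural tool is the space of \emph{affine functions} on $X$, namely those $f\colon X\to\R$ which restrict to an affine function of arc length along every geodesic. Modulo the constants these form a real vector space, and the finite affine rank hypothesis guarantees it is finite dimensional, say of dimension $m$. One then shows that the directions determined by these affine functions span a metric factor isometric to $\R^m$ which splits off as a direct factor, and that this factor is maximal among Euclidean direct factors. This yields a canonical splitting $X=\R^m\times X'$ in which $X'$ carries no nonconstant affine function and hence no further $\R$ factor.

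Next I would decompose $X'$ into irreducibles by analysing the collection of all metric product splittings $X'=Y\times Z$. The crux is to show that any two such splittings are \emph{compatible}: the associated families of parallel geodesics span flat quadrilaterals, so the two splittings admit a common refinement. This is where the finite affine rank condition does the essential work, bounding the number of independent splitting directions and ensuring that iterating toward a finest splitting terminates after finitely many steps, producing $X'=Y_1\times\cdots\times Y_n$ with each $Y_i$ irreducible and, by construction, not isometric to $\R$ or a point. The uniqueness of this decomposition up to reordering and isometry is the de Rham uniqueness statement, and it follows from the compatibility of splittings together with the coordinate-free characterisation of the factors.

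Finally I would deduce the exact sequence. Since the decomposition $X=Y_0\times Y_1\times\cdots\times Y_n$ is canonical, any isometry $\phi$ of $X$ must carry it to itself: it preserves the Euclidean factor $Y_0$, which is detected intrinsically by the affine functions, and permutes the factors $Y_1,\ldots,Y_n$, necessarily sending each $Y_i$ to an isometric $Y_{\sigma(i)}$. Assigning to $\phi$ this permutation defines the homomorphism $Isom(X)\to{\mathcal P}$. Its kernel consists exactly of the isometries preserving every factor setwise, which, since an isometry respecting a product splitting itself splits as a product of isometries of the factors, is precisely $Isom(Y_0)\times Isom(Y_1)\times\cdots\times Isom(Y_n)$; surjectivity follows by realising each $\sigma\in{\mathcal P}$ as the isometry that swaps the relevant isometric factors through fixed isometries between them. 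The main obstacle is the second step: establishing compatibility of arbitrary splittings with no smoothness, properness or curvature bound available, so that the classical holonomy and parallel transport argument must be replaced by a purely metric analysis of flats, with the necessary finiteness supplied by the affine rank hypothesis.
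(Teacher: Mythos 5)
First, a point of calibration: the paper contains no proof of this statement at all --- it is imported verbatim from Foertsch and Lytchak (\cite{foly}, Theorem 1.1 and Corollary 1.3), and the surrounding text only checks the hypotheses needed to apply it (the affine rank of a product of $k$ graphs is $k$, and a graph is irreducible since it contains no embedded square). So your sketch can only be measured against the original source. At that level your roadmap is faithful to how such a theorem is actually proved: isolate the Euclidean de Rham factor, show that product decompositions of the remainder admit a common refinement so that a finest decomposition exists and is unique, and then derive the exact sequence for $Isom(X)$ from canonicity, with the kernel identified via the fact that an isometry inducing the trivial permutation splits as a product of isometries of the factors.

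Nevertheless, as a proof your proposal has a genuine gap, and you half-admit it yourself: the two load-bearing claims --- (a) that the affine functions split off a maximal factor $\R^m$, and (b) that any two metric splittings of the remainder are compatible, with finite affine rank forcing the refinement process to terminate --- are asserted, not proved, and together they constitute essentially the entire content of \cite{foly}. In this setting there is no properness, no local compactness and no curvature bound, hence no holonomy, parallel transport or exponential map; the phrase ``the associated families of parallel geodesics span flat quadrilaterals'' is therefore a restatement of what must be shown, not an argument. Note also that segregating the Euclidean factor in step (a) is not a convenience but a necessity: $\R^2$ decomposes into lines in uncountably many ways, so uniqueness of a finest decomposition is simply false before the $\R$-directions are grouped into $Y_0$, which is exactly why the theorem is stated with a distinguished Euclidean factor. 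Your final paragraph on the exact sequence is essentially correct, but it is the easy part and is entirely conditional on the uniqueness statement you have not established. In short: right architecture, but the hard steps --- the actual theorem --- are assumed rather than proved.
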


Here the affine rank of $X$ is the supremum over all topological dimensions 
of affine spaces which isometrically embed into $X$ and is bounded above
by the topological dimension of $X$. In our case where
$X=\Gamma_1\times\ldots \times\Gamma_k$, we have that the affine rank
of $X$ is therefore equal to $k$ which is its topological dimension.
Also by noting that if $X=Y\times Z$ as metric spaces then $X$ is a geodesic
metric space if and only if $Y$ and $Z$ both are, we see that a graph
is an irreducible metric space as it does not contain an embedded copy
of a square. Thus in our situation where $X=\Gamma_1\times\ldots\times\Gamma_k$,
we see that if no $\Gamma_i$ is equal to the simplicial line $L$ then this
decomposition is the above decomposition into irreducible metric spaces, and
otherwise we group all copies of $L$ together to get this decomposition.
Thus we obtain directly from this discussion and Theorem \ref{prod}:
\begin{co} \label{ismprd}
Suppose that $X=\Gamma_1\times \ldots \times \Gamma_k$ is a finite product of
graphs, where each $\Gamma_i$ has the induced path metric and $X$
has the $\ell_2$ product metric. Suppose that $G$ is any group
acting by isometries on $X$.\\
(i) If no graph $\Gamma_i$ is the simplicial line then $G$
has a finite index subgroup $H$ which preserves the
factors and acts by graph automorphisms on each factor $\Gamma_i$.\\
(ii) If $\Gamma_{j+1},\ldots,\Gamma_k$ are those factors which are the
simplicial line $L$ then $G$ has a finite index subgroup $H$ which
preserves each factor in the decomposition
$\Gamma_1\times \ldots \times \Gamma_j\times \R^{k-j}$ where $H$ acts
on each $\Gamma_1,\ldots ,\Gamma_j$ by graph automorphisms and by
Euclidean isometries on $\R^{k-j}$.
\end{co}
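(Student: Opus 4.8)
The plan is to derive Corollary \ref{ismprd} directly from Theorem \ref{prod} together with the structural observations already made in the surrounding text. The key facts to assemble are: first, that the finite product $X=\Gamma_1\times\ldots\times\Gamma_k$ has finite affine rank (indeed the text notes this rank equals $k$, its topological dimension), so Theorem \ref{prod} applies and gives a unique metric decomposition $X=Y_0\times Y_1\times\ldots\times Y_n$ with $Y_0$ Euclidean and each $Y_i$ irreducible and not isometric to $\R$ or a point; second, that each graph factor $\Gamma_i$ is itself irreducible as a metric space (since, as observed, a graph contains no embedded square); and third, the natural exact sequence relating $Isom(X)$ to the product of the factor isometry groups via the finite permutation group $\mathcal P$.

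First I would identify the Forester--Lytchak--Yamauchi decomposition with our given product decomposition. In case (i), where no $\Gamma_i$ is the simplicial line $L$, each $\Gamma_i$ is irreducible and not isometric to $\R$ or a point, so by uniqueness the $Y_i$ are exactly the $\Gamma_i$ (reindexed) and $Y_0$ is a point; hence $n=k$ and $\mathcal P\le\Sigma_k$. In case (ii), the factors $\Gamma_{j+1},\ldots,\Gamma_k$ are each isometric to $L=\R$ (up to the simplicial/metric distinction flagged in Section 4), so they are grouped together into the single Euclidean factor $Y_0\cong\R^{k-j}$, while $\Gamma_1,\ldots,\Gamma_j$ remain the irreducible factors $Y_1,\ldots,Y_j$. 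This matches precisely the decomposition $\Gamma_1\times\ldots\times\Gamma_j\times\R^{k-j}$ stated in the corollary.

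Next I would extract the finite index subgroup. Since $G$ acts by isometries on $X$, composing with the map $Isom(X)\to\mathcal P$ from the exact sequence yields a homomorphism $G\to\mathcal P$. As $\mathcal P$ is a finite group (a subgroup of $\Sigma_n$), its kernel $H$ is a finite index subgroup of $G$, and $H$ lands in $Isom(Y_0)\times Isom(Y_1)\times\ldots\times Isom(Y_n)$. This is exactly the statement that $H$ preserves every factor of the decomposition. In case (i) each factor is the graph $\Gamma_i$, and since (as noted at the start of Section 4) the isometry group of a connected graph with its path metric is precisely $Aut(\Gamma_i)$, the action of $H$ on each $\Gamma_i$ is by graph automorphisms, giving (i). In case (ii) the same reasoning applies to the irreducible graph factors $\Gamma_1,\ldots,\Gamma_j$, while on the Euclidean factor $H$ acts by elements of $Isom(\R^{k-j})$, i.e.\ by Euclidean isometries, giving (ii).

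The main obstacle is not any deep argument but the careful bookkeeping around the simplicial line. The subtlety is that $L$ as a graph is irreducible in the metric sense only in the weak way that it equals $\R$, whose isometry group is strictly larger than $Aut(L)$; this is precisely the de Rham phenomenon highlighted in the text, and it is why the $L$-factors must be absorbed into $Y_0$ rather than treated as the irreducible $Y_i$. I would need to state explicitly that Theorem \ref{prod} excludes $\R$-factors from the $Y_i$ and collects all such one-dimensional Euclidean directions into $Y_0$, so that grouping the copies of $L$ together is forced by the uniqueness clause. Once this identification is pinned down, the passage to the finite index kernel of $G\to\mathcal P$ and the recognition of $Isom(\Gamma_i)=Aut(\Gamma_i)$ for genuine graph factors are both routine.
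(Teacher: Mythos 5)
Your proposal is correct and follows essentially the same route as the paper: the paper also deduces the corollary directly from Theorem \ref{prod} by noting that each graph factor is irreducible (no embedded square), grouping any simplicial-line factors into the Euclidean factor $Y_0$, and then passing to the finite index subgroup arising from the exact sequence onto the finite permutation group $\mathcal P$, with $Isom(\Gamma_i)=Aut(\Gamma_i)$ for the non-line factors. Your explicit handling of the kernel of $G\rightarrow\mathcal P$ and of the uniqueness clause forcing the copies of $L$ into $Y_0$ just spells out what the paper leaves as "directly from this discussion."
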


We can now relate all of this to the classification of isometries into the
elliptic/``parabolic''/loxodromic cases. We first note that if $g$ is an
isometry of $\R^n$ then $g$ can only be elliptic or loxodromic (see for
instance \cite{bh} II.6.5) as the ``parabolic'' case cannot occur.
\begin{prop} \label{type}
Suppose that $X=\Gamma_1\times \ldots \times\Gamma_k$ is a product of
graphs and $g$ is an isometry of $X$. If $h=g^n$ is an
isometry preserving the decomposition $\Gamma_1\times \ldots \times \Gamma_k$
or $\Gamma_1\times \ldots \times \Gamma_j\times \R^{k-j}$ as guaranteed by
the above result then we have:\\
(i) The isometry $g$ acts elliptically on $X$ if and only if $h$ acts
elliptically on each of the factors.\\
(ii) The isometry $g$ acts ``parabolically'' on $X$ if and only if $h$ acts
elliptically or parabolically on each of the factors, with at least
one action on these factors being parabolic.\\
(iii) The isometry $g$ acts loxodromically on $X$ if and only if at least one
of the actions of $h$ on the factors is loxodromic.
\end{prop}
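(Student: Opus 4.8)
The plan is to reduce all three claims about $g$ to the corresponding claims about $h=g^n$, using the fact recorded in Section 2 that an isometry is elliptic, loxodromic or ``parabolic'' precisely when any fixed nonzero power of it is. Since $h$ preserves the relevant decomposition, I would write $X=F_1\times\ldots\times F_\ell$ for its factors, where each $F_i$ is one of the graphs $\Gamma_i$ or else the Euclidean factor $\R^{k-j}$, so that $h=(h_1,\ldots,h_\ell)$ with $h_i$ an isometry of $F_i$. Everything then follows from a single displacement computation: fixing a base point $\mathbf{x_0}=(x_0^{(1)},\ldots,x_0^{(\ell)})$ and abbreviating $D_i(m)=d_i(h_i^m x_0^{(i)},x_0^{(i)})$, the $\ell_2$ product metric gives, for every $m$,
\[
d(h^m\mathbf{x_0},\mathbf{x_0})=\Big(\sum_{i=1}^\ell D_i(m)^2\Big)^{1/2}.
\]

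From this formula the three parts drop out by comparing the left side with the (nonnegative) summands on the right. For part (i), the orbit $\{h^m\mathbf{x_0}\}$ is bounded if and only if each $D_i(m)$ is bounded in $m$, i.e.\ if and only if each $h_i$ is elliptic on $F_i$; translating back through the power invariance, $g$ is elliptic on $X$ exactly when every factor action of $h$ is elliptic. For part (iii), I would divide by $m$ and let $m\to\infty$: since $D_i(m)/m\to\tau(h_i)$ by the definition of the stable translation length on each factor, continuity of $(t_1,\ldots,t_\ell)\mapsto(\sum t_i^2)^{1/2}$ on the finite-dimensional space yields
\[
\tau(h)=\Big(\sum_{i=1}^\ell\tau(h_i)^2\Big)^{1/2},
\]
so $\tau(h)>0$ exactly when some $\tau(h_i)>0$, that is, when at least one $h_i$ is loxodromic. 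Part (ii) is then forced, since $h$ is ``parabolic'' on $X$ precisely when its orbit is unbounded but $\tau(h)=0$: by the previous two equivalences this means no $h_i$ is loxodromic (so each is elliptic or parabolic) while at least one $h_i$ is non-elliptic, hence parabolic. The exclusion of a parabolic action on the Euclidean factor is automatic, as an isometry of $\R^n$ is never parabolic, which is why grouping the simplicial lines into $\R^{k-j}$ is harmless.

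The computations here are routine; the only point needing a little care is the interchange of limit and square root in the derivation of $\tau(h)^2=\sum\tau(h_i)^2$. This is not a real obstacle, since each $D_i(m)/m$ converges (the stable translation length on each factor exists by Section 2) and the outer function is continuous on $\R_{\geq 0}^{\ell}$, so the limit passes inside. The one genuinely structural ingredient is the passage to a decomposition-preserving power $h$, but that is exactly what Corollary \ref{ismprd} and the hypothesis of the proposition supply, so no extra work is required there.
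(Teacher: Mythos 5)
Your proof is correct and takes essentially the same approach as the paper's: reduce to the decomposition-preserving power $h$, compare displacements in the $\ell_2$ product metric with those on the factors, characterise ellipticity by bounded orbits and loxodromy via the stable translation length, and obtain the parabolic case by elimination. The only cosmetic difference is that you package the comparison as the exact identity $\tau(h)^2=\sum_i \tau(h_i)^2$, whereas the paper works with the one-sided inequalities $d(h^m(v),v)\geq d_i(h_i^m(v_i),v_i)$ together with a summation bound for the case where no factor is loxodromic; both yield the same conclusions.
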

\begin{proof}
First of all we can replace $g$ with $h$ as this will not affect the type
of action on the whole of $X$. Case (i) holds because we will have bounded
orbits on all factors if and only if there is a bounded orbit on the whole
space. Similarly say $h$ acts as a loxodromic element $h_i$ on $\Gamma_i$,
so there is $c>0$ such that for an arbitrary vertex $v_i\in \Gamma_i$ we have
$d_i(h_i^m(v_i),v_i))\geq c|m|$. Then for any vertex $v\in X$ we
have $d(h^m(v),v)\geq d_i(h_i^m(v_i),v_i))\geq c|m|$ so that $h$ is loxodromic
too. Thus we are only left with the case where $h$ acts as a parabolic
element $h_i$ on some $\Gamma_i$ but where no action on a factor is
loxodromic. Again using the inequality $d(h^m(v),v)\geq d_i(h_i^m(v_i),v_i))$,
we see that $h_i$ having unbounded orbits implies the same for $h$. However
in each factor $\Gamma_j$ 
we will also have $d_j(h_j^m(v_j),v_j)/m$ tending to zero as $m$ tends to
infinity, so by adding and using inequalities we have that
$d(h^m(v),v)/m$ tends to zero too, thus $h$ does not act loxodromically
on $X$.
\end{proof}

\section{Proper actions on products of graphs}

We saw in Section 3 a range of definitions of what it might mean for a
group to act properly/geometrically on a geodesic metric space, with particular
reference to the case where $X$ was a hyperbolic space and even
a hyperbolic graph where we allowed both locally finite and non locally
finite graphs. Here we consider how these definitions work when our space
is a finite product of locally finite hyperbolic graphs.
As this will result in a proper metric space, metrically  and topologically
proper actions are the same. In particular we 
now introduce the two classes of groups that we will be studying.
\begin{defn} The class $\mathcal M$ is the class of groups possessing
a purely loxodromic and (metrically) proper
action by isometries on a finite product of locally finite
hyperbolic graphs.

The subclass $\mathcal M_0$ is the class of groups with
such an action by automorphisms.
\end{defn}

If our focus
is on countable groups (which is the case throughout this paper anyway)
then in fact the metrically proper part of this definition becomes redundant.
\begin{prop} \label{nomp}
Suppose that $G$ is a group having an action by isometries/automorphisms on a
finite product $P$ of locally finite hyperbolic graphs which is purely
loxodromic. Then the group $G$ has a purely loxodromic action by
isometries/automorphisms on such a space
which is also metrically proper
if and only if $G$ is countable.
\end{prop}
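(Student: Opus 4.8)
The plan is to prove the two directions separately. The forward direction is essentially trivial: if $G$ admits a metrically proper action then for a fixed basepoint $x_0$ and each integer $R>0$ the set $\{g\in G : d(g(x_0),x_0)\leq R\}$ is finite, and since every $g\in G$ lies in one of these sets (as $d(g(x_0),x_0)$ is some finite number), $G$ is a countable union of finite sets, hence countable. This needs no hypothesis beyond metric properness and works for any metric space.

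The substantive direction is the converse: assuming $G$ is countable and has some purely loxodromic action (not assumed proper) on a finite product $P$ of locally finite hyperbolic graphs, I must manufacture a new purely loxodromic action that is also metrically proper, while staying inside the same class (isometries, or automorphisms in the $\mathcal{M}_0$ case). First I would recall from the introduction that every countable group $G$ acts freely and metrically properly by graph automorphisms on a single locally finite hyperbolic graph, namely its hyperbolic cone $\Gamma_{\mathrm{cone}}$ from \cite{grman}. The natural idea is then to take the product $P\times\Gamma_{\mathrm{cone}}$, letting $G$ act diagonally: it acts on $P$ by the given purely loxodromic action and on $\Gamma_{\mathrm{cone}}$ by the hyperbolic cone action. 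Equipping the product with the $\ell_2$ metric, this is again a finite product of locally finite hyperbolic graphs, and $G$ acts by isometries (or automorphisms, if the original action was by automorphisms and noting the cone action is by automorphisms).

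The two properties to verify are that the diagonal action is metrically proper and still purely loxodromic. Metric properness is inherited from the cone factor: if $d(g(\mathbf{x}),\mathbf{x})\leq R$ in the product then in particular the $\Gamma_{\mathrm{cone}}$-coordinate satisfies $d_{\mathrm{cone}}(g(y_0),y_0)\leq R$, and since the cone action is already metrically proper, only finitely many $g$ can satisfy this. For the purely loxodromic property, I would invoke Proposition \ref{type}: an infinite order element $g$ acts loxodromically on the product if and only if it acts loxodromically on at least one factor. Since $g$ already acts loxodromically on $P$ by hypothesis, a suitable power preserving the decomposition still has a loxodromic factor coming from $P$, so $g$ remains loxodromic on the enlarged product.

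The main obstacle I anticipate is a bookkeeping subtlety rather than a deep difficulty: Proposition \ref{type} is phrased for an isometry $g$ whose power $h=g^n$ preserves the product decomposition, and one must ensure that adjoining the cone factor does not disturb the decomposition into irreducible pieces that Corollary \ref{ismprd} supplies. I would check that $\Gamma_{\mathrm{cone}}$ is not the simplicial line (so it contributes a genuine irreducible factor rather than merging into a Euclidean de Rham part), and that the same finite-index, factor-preserving subgroup behaviour persists; loxodromicity on the original $P$-factor is unaffected by passing to a power, so the conclusion of Proposition \ref{type}(iii) still applies and the augmented action is purely loxodromic as required.
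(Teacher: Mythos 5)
Your proposal is correct and follows essentially the same route as the paper: both directions match, and the converse uses exactly the paper's construction of taking the diagonal action on $P\times\Gamma_{\mathrm{cone}}$ with the Groves--Manning hyperbolic cone and deducing metric properness from the cone coordinate. The only cosmetic difference is that you verify pure loxodromicity by invoking Proposition \ref{type} (with the attendant bookkeeping about powers and the de Rham decomposition), whereas the paper gets it directly from the coordinate inequality $d_{P\times X}(g^n(p,x),(p,x))\geq d_P(g^n(p),p)$, which avoids any appeal to factor-preservation.
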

\begin{proof} Any group with a metrically proper action on some metric
space must be countable. As for the converse, if $G$ is countable and has
an action by isometries/automorphisms on $P$ where every infinite order
element is loxodromic then take the metrically proper action of $G$ on its
hyperbolic cone $X$, which is a locally finite hyperbolic graph. Then $G$
acts by isometries/automorphisms on $P\times X$. Moreover for any $g\in G$
and $(p,x)\in P\times X$ we have $d_{P\times X}(g(p,x),(p,x))\geq
d_X(g(x),x)$, thus this action by isometries/automorphisms on $P\times X$
is metrically proper too.
\end{proof}
Thus to show a given countable
group $G$ is in our class $\mathcal M_0$ (or $\mathcal M$) it is enough
to find a finite index subgroup $H$ of $G$ and an action of $H$ by
automorphisms (or by isometries)
on a finite product of locally finite hyperbolic graphs
preserving factors, where every infinite order element of $H$ acts
loxodromically on one of the factors, by Proposition \ref{type}.

Our two classes of groups introduced above have good
closure properties as we now record.
\begin{prop} Both of the classes $\mathcal M$ and $\mathcal M_0$
are closed under taking finite direct products, arbitrary subgroups
and supergroups of finite index.
\end{prop}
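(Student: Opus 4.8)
The plan is to treat the three operations separately, in each case producing, from the given data, a purely loxodromic action by isometries (respectively by automorphisms) on a finite product of locally finite hyperbolic graphs, and then invoking Proposition \ref{nomp} to recover the metrically proper condition for free. This last step is legitimate because every group in $\mathcal{M}$ (hence in $\mathcal{M}_0$) is countable, being a group with a metrically proper action, and countability is preserved by all three operations: a subgroup or a finite-index supergroup of a countable group is countable, and a product of two countable groups is countable. Throughout I will use that concatenating the lists of factor graphs turns finite products of locally finite hyperbolic graphs into another such product, and that by Section 2 an isometry is loxodromic if and only if some, equivalently any, nonzero power of it is.

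The subgroup case is immediate: if $H\leq G\in\mathcal{M}$ then restricting the $G$-action on $P$ to $H$ gives an action by the same isometries (automorphisms) on the same space $P$, and any infinite order $h\in H$ is an infinite order element of $G$, so acts loxodromically. For a finite direct product $G_1\times G_2$ with $G_i$ acting on $P_i$, I would let $G_1\times G_2$ act on $P_1\times P_2$ coordinatewise; this is visibly by isometries, and by automorphisms of the product graph structure if both factor actions are. If $(g_1,g_2)$ has infinite order then some $g_i$, say $g_1$, has infinite order and hence acts loxodromically on $P_1$; the $\ell_2$ inequality $d_{P_1\times P_2}((g_1,g_2)^s(p),p)\geq d_{P_1}(g_1^s p_1,p_1)\geq c|s|$ then shows $(g_1,g_2)$ is loxodromic on the product, exactly as in Proposition \ref{type}(iii).

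The substantial case is closure under finite-index supergroups, handled by the induction construction. Given $H$ of index $n$ in $G$ acting on $P$, choose left coset representatives $t_1,\ldots,t_n$ with $G=\bigsqcup_i t_i H$, and record for each $g\in G$ the permutation $\sigma_g\in\Sigma_n$ and elements $h_{g,i}\in H$ determined by $g t_i=t_{\sigma_g(i)}h_{g,i}$. I would then let $G$ act on $P^n$ by sending $(p_1,\ldots,p_n)$ to the tuple whose $\sigma_g(i)$-th entry is $h_{g,i}\cdot p_i$. A direct check shows this is a left action, that it permutes the $n$ identical copies of $P$ according to $\sigma_g$ while applying the isometries $h_{g,i}$ within factors, and hence that it is an action by isometries of the $\ell_2$ product $P^n$; when the $H$-action is by automorphisms it is by automorphisms of the product graph structure, since permuting isomorphic factors is itself such an automorphism.

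It remains to verify pure loxodromicity, which is where the main care is needed. Let $g\in G$ have infinite order and let $m$ be the order of $\sigma_g$, so that $\sigma_{g^m}=e$ and $g^m$ preserves each of the $n$ copies of $P$, acting on the $i$-th by the element $t_i^{-1}g^m t_i$. This element lies in $H$ and, being conjugate in $G$ to the infinite order element $g^m$, has infinite order; the purely loxodromic hypothesis on the $H$-action therefore makes it loxodromic on $P$. Applying the $\ell_2$ inequality to this factor shows $g^m$ is loxodromic on $P^n$, whence so is $g$. The main obstacle I anticipate is precisely this permutation phenomenon: the induced action does not preserve factors and need not be metrically proper on the nose, so one cannot read off loxodromicity factor-by-factor directly and must first pass to the power $g^m$ that kills $\sigma_g$, leaning on Proposition \ref{nomp} rather than the construction itself for the metrically proper condition.
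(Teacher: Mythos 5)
Your proposal is correct and follows essentially the same route as the paper: countability plus Proposition \ref{nomp} disposes of metrical properness, subgroups and direct products are handled by restriction and the coordinatewise $\ell_2$ estimate, and finite-index supergroups by the induced action on $P^n$ with a power of $g$ acting factorwise by infinite-order conjugates. The only cosmetic difference is that the paper first replaces $H$ by a normal subgroup of finite index (legitimate by subgroup closure) so that $H$ itself preserves the factors of $P^n$, whereas you keep $H$ arbitrary and instead pass to the power $g^m$ killing the coset permutation $\sigma_g$ — the same idea in slightly different clothing.
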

\begin{proof} These operations clearly preserve countability of the groups
involved.

If the group $A$ has an action by isometries/automorphisms
on a product of locally finite hyperbolic graphs $P$ which is
purely loxodromic and $B$ has
such an action on $Q$ then we will have the direct product action
on $P\times Q$ (with the obvious $\ell_2$ product metric) via
$(a,b)((p,q))=(a(p),b(q))$. Suppose that some element $(a,b)$ has
infinite order but does not act loxodromically on $P\times Q$, which means
that neither $a$ on $P$ nor $b$ on $Q$ acts loxodromically. Hence
we conclude that both $a$ and $b$ have finite
order and so does $(a,b)$. Otherwise $a$ (say) acts loxodromically on $P$,
giving us $c>0$ such that
\[c\leq \frac{d_P(a^n(p),p)}{n}\leq \frac{d_{P\times Q}((a,b)^n((p,q),(p,q))}
{n}\]
and thus $(a,b)$ is loxodromic in the product action.

Subgroup closure is clear. If $H$ has index $i$ in $G$, with $H\unlhd G$ without
loss of generality, and $H$ acts suitably on the product $P$ then the induced
action of $G$ is an action by isometries/automorphisms
on $P^i$ which permutes these factors. The
corresponding induced action of $H$ on $P^i$ preserves these factors and
is the original action on the first $P$ factor of $P^i$ and a conjugate
action on the other factors. Thus any infinite order element $g\in G$ has
$g^n\in H$ which is acting loxodromically on $P^i$ because it does so on
each factor.
\end{proof}

The class of groups in $\mathcal M$ and in $\mathcal M_0$ is wide:
for instance they both contain (as well as subgroups, finite index supergroups
and direct products of the following) $\Z^n$ for any $n$, finite direct
products of arbitrary hyperbolic groups and their subgroups, as well
as Burger - Mozes groups (these act geometrically on
products of locally finite trees).
Also included are all finitely generated subgroups of $PGL_2(\F_q(t))$ for
$q$ a power of a prime, as we can take a discrete
valuation on $\F_q(t)$ and the
corresponding action on the Bruhat - Tits tree which is regular of valence
$q+1$. If the group is finitely generated then we can take a finite number
of valuations such that the stabiliser of any vertex in this product of
trees is finite (for instance see \cite{meggd} Section 2).
This includes interesting examples such as the wreath
product for a prime $p$
\[C_p\wr \Z=\langle \sma{cc}1&1\\0&1\fma, \sma{cc}t&0\\0&1\fma\rangle
\mbox{ in }PGL_2(\F_p(t)).\]

\section{Locally finite hyperbolic graphs}
In the next three sections we look for good group theoretic properties which
are possessed by all of $\mathcal M_0$ (though it will turn out that some
of these properties hold for all groups in $\mathcal M$ and some do not).
The properties we examine are akin to ones shared by all word hyperbolic
and non positively curved groups. Specifically there should be a result
on ``small subgroups'' which gives restrictions on which virtually soluble
groups can occur, these ``small subgroups'', if finitely generated, should
be undistorted in any finitely generated group of $\mathcal M_0$, and
(most importantly for applications) there should be some restriction on the
centraliser of an infinite order element in these groups.
Consequently we will need to utilise the hyperbolicity of our factor
graphs, as well as their local finiteness (and in some cases the fact that
the action is by automorphisms and not just isometries).

We first mention some standard definitions and results.
Given a metric space $X$ and constants $K\geq 1,C\geq 0$,
we say that a function $f:\R\rightarrow X$
is a $(K,C)$-{\it quasigeodesic} if for all $s,t\in\R$ we have
\[\frac{1}{K}|s-t|-C\leq d(f(s),f(t))\leq K|s-t|+C.\]
If instead the domain is $\Z$ but the same equation holds then here
we say that the function is a {\it discrete} $(K,C)$-quasigeodesic.
On being given any isometry $g$ and any point $x_0$ of an arbitrary metric
space $X$, we have that the orbit of $x_0$ under $\langle g\rangle$ is a
discrete quasigeodesic if and only if $g$ is a loxodromic element.
Indeed if $g$ is loxodromic then we can take $C=0$ as we saw in Section 2
that
$\tau(g)|m-n| \leq d(g^m(x_0),g^n(x_0))$. Thus the lower bound is
independent of $x_0$, but we also have
\[d(g^m(x_0),g^n(x_0))=d(g^{m-n}(x_0),x_0))\leq |m-n| d(g(x_0),x_0)\]
so that the upper bound will necessarily depend on $x_0$, indeed we
can take $K=\mbox{max}(D_g(x_0),1/\tau(g))$ where $D_g:X\rightarrow [0,\infty)$
is the displacement function of $g$.

Suppose now that $X$ is any geodesic metric space (here geodesics need not be
unique) and we have an isometry $g$ of $X$ which is acting loxodromically.
We can obtain a continuous quasigeodesic from the orbit of $x_0$ by first
joining $x_0$ to $g(x_0)$ by some geodesic segment $\gamma$ of length
$l:=D_g(x_0)>0$ and then for each $n$ in $\Z\setminus\{0\}$ we join $g^n(x_0)$ 
to $g^{n+1}(x_0)$ by the image of $\gamma$ under $g^n$, which is also
a geodesic segment of length $\gamma$.
We can then define $f:\R\rightarrow X$ by sending $[n,n+1]$ evenly to
this geodesic $g^n(\gamma)$, so we first map $[n,n+1]$ to
$[ln,l(n+1)]$ and then map this isometrically to $g^n(\gamma)$.
Now suppose that we have $s,t\in\R$ with $s\in [m,m+1]$ and $t\in [n,n+1]$.
We can then see that
\[l|m-n|-2l\leq d(f(s),f(t))\leq l|m-n|+2l\]
but as we have $|t-s|-1\leq |m-n|\leq |t-s|+1$, we obtain
\[l|s-t|-3l\leq d(f(s),f(t))\leq l|s-t|+3l.\]
In particular if $l\geq 1$ then we have a $(l,3l)$-quasigeodesic
containing the orbit of $x_0$ under $\langle g\rangle$ and whose image is
$\langle g\rangle$-invariant.

So far we have not used anything about $X$ other than it is a geodesic
metric space. But let us now say that $X$ is a $\delta$-hyperbolic space
and $g$ is a loxodromic isometry of $X$ with fixed points $g^\pm$ on
$\partial X$. We know that on
$X\cup\partial X$, it is the case that for any $x\in X$ we have
$g^n(x)\rightarrow g^+$ as $n\rightarrow\infty$ and $g^n(x)\rightarrow g^-$ as
$n\rightarrow -\infty$, so for $l\geq 1$ our $(l,3l)$-quasigeodesic has
endpoints $g^\pm$. We now utilise the stability of quasigeodesics
with the same end points, by quoting a version of this property
that is Lemma 2.11 in \cite{man}.
\begin{lem} \label{mann}
Let $K\geq 1$, $C\geq 0$ and $\delta\geq 0$. Then there is some
$B=B(K,C,\delta)$ so that if $\gamma$ and $\gamma'$ are two
$(K,C)$-quasigeodesics with the same endpoints in $X\cup\partial X$, and
$X$ is a $\delta$-hyperbolic geodesic metric space then the image of
$\gamma$ lies in a $B$-neighbourhood of the image of $\gamma'$.
\end{lem}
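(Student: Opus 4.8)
The plan is to reduce everything to the classical Morse lemma for quasigeodesics with \emph{finite} endpoints and then bridge to the boundary case using Gromov products, taking care never to invoke a geodesic joining two ideal points, since $X$ is not assumed proper and such geodesics need not exist.

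First I would record the finite-endpoint version: for the given $K,C,\delta$ there is $R=R(K,C,\delta)$ such that any $(K,C)$-quasigeodesic with endpoints $p,q\in X$ stays within Hausdorff distance $R$ of any geodesic segment $[p,q]$. This is standard (for instance \cite{bh} III.H.1.7), and crucially its proof uses only that $X$ is geodesic and $\delta$-hyperbolic, not that it is proper: one first tames the quasigeodesic to a continuous one with controlled constants (exactly the construction carried out just before the lemma) and then runs the usual length-versus-divergence estimate. In particular two $(K,C)$-quasigeodesics sharing the same finite endpoints lie within $2R$ of one another, which already settles the case where both endpoints of $\gamma$ and $\gamma'$ lie in $X$.

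The substance is the case of ideal endpoints, say where $\gamma,\gamma'$ both run from $a$ to $b$ in $\partial X$. Fixing a point $x$ on $\gamma$, I must produce a point of $\gamma'$ within a uniform distance of $x$. I would truncate both quasigeodesics far out on each side, choosing $\gamma(s_-),\gamma(s_+)$ and $\gamma'(u_-),\gamma'(u_+)$ with $\gamma(s_-),\gamma'(u_-)\to a$ and $\gamma(s_+),\gamma'(u_+)\to b$. Because $\gamma$ and $\gamma'$ converge to the same ideal points, the Gromov products $(\gamma(s_+)\cdot\gamma'(u_+))_x$ and $(\gamma(s_-)\cdot\gamma'(u_-))_x$ tend to infinity, so I can arrange both to exceed $R+4\delta$. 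Applying the finite version to the subquasigeodesic $\gamma|_{[s_-,s_+]}$ puts $x$ within $R$ of a point $w$ on the geodesic $[\gamma(s_-),\gamma(s_+)]$. Now consider the geodesic quadrilateral on $\gamma(s_-),\gamma(s_+),\gamma'(u_+),\gamma'(u_-)$: its two ``connecting'' sides $[\gamma(s_-),\gamma'(u_-)]$ and $[\gamma(s_+),\gamma'(u_+)]$ lie at distance at least $(\gamma(s_\pm)\cdot\gamma'(u_\pm))_x-2\delta>R+2\delta$ from $x$, hence more than $2\delta$ from $w$. Since a geodesic quadrilateral is $2\delta$-thin, $w$ must lie within $2\delta$ of the remaining long side $[\gamma'(u_-),\gamma'(u_+)]$, giving a point $w'$ there with $d(w,w')\le 2\delta$. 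A final application of the finite Morse lemma to $\gamma'|_{[u_-,u_+]}$ yields $x'\in\gamma'$ with $d(w',x')\le R$, so $d(x,x')\le 2R+2\delta$. Thus $B:=2R(K,C,\delta)+2\delta$ works, and the mixed cases (one or both endpoints finite) follow by the same argument with the relevant connecting side collapsing onto the shared finite endpoint.

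I expect the main obstacle to be exactly the non-properness: one cannot join $a$ to $b$ by a geodesic and compare $\gamma,\gamma'$ to it directly, so the whole argument must be routed through finite truncations, whose geodesic segments do exist. The delicate point is to check that the truncation parameters can be chosen to push the connecting sides of the quadrilateral uniformly far from $w$ — this is where convergence to a common ideal point, phrased through Gromov products, does the work — while verifying that the final bound $2R+2\delta$ is genuinely independent of those auxiliary choices and depends only on $K$, $C$ and $\delta$.
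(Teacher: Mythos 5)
The paper never proves this lemma: it is imported verbatim as Lemma 2.11 of \cite{man}, so there is no internal argument to compare yours against --- the only ``approach'' in the paper is a citation. Your proposal therefore has to be judged on its own, and it is correct. The finite-endpoint Morse lemma (\cite{bh} III.H.1.7) indeed uses only that $X$ is geodesic and $\delta$-hyperbolic, never properness; quasigeodesic rays converge to well-defined points of the sequential boundary, and sharing an ideal endpoint means precisely that the Gromov products $(\gamma(s_\pm)\cdot\gamma'(u_\pm))_x$ blow up, so your truncation parameters can always be pushed far enough out. The quadrilateral step is sound: the distance from $x$ to a geodesic $[p,q]$ is at least $(p\cdot q)_x$ (your subtraction of $2\delta$ is unnecessary but harmless), and cutting the quadrilateral along a diagonal gives $2\delta$-thinness, forcing $w$ onto the side $[\gamma'(u_-),\gamma'(u_+)]$ once the connecting sides are excluded; the resulting bound $B=2R+2\delta$ depends only on $(K,C,\delta)$ and not on the truncations, and the mixed and finite-endpoint cases degenerate correctly. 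The only bookkeeping caveat is that, since the paper allows ``any of the equivalent definitions'' of $\delta$-hyperbolicity, your thinness and Gromov-product constants may each pick up a bounded multiple of $\delta$, changing only the numerical form of $B$.

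What your route buys is worth stating: the paper's hyperbolic spaces are explicitly not assumed proper (Section 2), and the lemma is invoked in that generality (e.g.\ in Proposition \ref{zprop}), so a proof that never joins two ideal points by a geodesic --- routing everything through finite truncations as you do --- is exactly the kind of argument the citation to \cite{man} is standing in for. Your version makes the paper self-contained at that point, at the cost of a page of standard but fiddly hyperbolic geometry.
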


Let $X$ be a locally finite graph with vertex set $V(X)$
which is $\delta$-hyperbolic for some $\delta>0$ when turned into
a geodesic metric space (which will be proper),
on being given the path metric with all edges
having length 1. We apply the above result as follows:
suppose that $g$ is a loxodromic isometry of $X$ which is acting as
a graph automorphism of $X$. Then
there are no fixed points and we can take a vertex $v_0\in X$ of
minimum displacement amongst all vertices, so that $l=D_g(v_0)$ is
an integer at least 1. We now define a subset of $V(X)$ that will
act as a discrete version of an axis for us.

\begin{defn}
Let the {\it pseudoaxis} $P_g$ of the isometry $g$ be
\[P_g:=\{v\in V(X): d(v,g(v))=d(v_0,g(v_0))\}.\]
\end{defn}
This set is non empty, as it contains the orbit of $v_0$, and is
$\langle g\rangle$-invariant so is a union of $\langle g\rangle$-orbits.
Note also that for another isometry $h$ of $X$ we have
$P_{hgh^{-1}}=h(P_g)$ and $P_g=P_{g^{-1}}$, although we need not have
$P_{g^n}=P_g$ in general if $n>0$.
\begin{prop} \label{fini}
If $g$ is a loxodromic isometry of a locally finite
$\delta$-hyperbolic graph then the pseudoaxis $P_g$ is a finite union of
$\langle g\rangle$-orbits.
\end{prop}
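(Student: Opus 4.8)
The plan is to show that every $\langle g\rangle$-orbit contained in $P_g$ has a representative lying in one fixed ball about $v_0$, after which local finiteness finishes the argument at once. Since $P_g$ is $\langle g\rangle$-invariant it is automatically a union of orbits, so the only content is to bound the number of these orbits.

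First I would observe that every $v\in P_g$ has displacement $D_g(v)=l$, exactly the minimal value realised at $v_0$. Running the construction preceding Lemma \ref{mann} with basepoint $v$ therefore produces a continuous $(l,3l)$-quasigeodesic through the orbit $\langle g\rangle v$, and crucially the constants $(l,3l)$ are the same for every such $v$, since $l\geq 1$ is a single fixed integer. By Proposition \ref{cyc}(ii) each of these quasigeodesics, as well as the one built from $v_0$, has endpoints exactly $g^\pm$ in $X\cup\partial X$, because $g^n(x)\to g^+$ and $g^{-n}(x)\to g^-$ for any basepoint $x$.

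Next I would apply the stability of quasigeodesics, Lemma \ref{mann}, with the uniform constants $K=l$ and $C=3l$: there is a single $B=B(l,3l,\delta)$ so that the quasigeodesic through any $v\in P_g$ lies in the $B$-neighbourhood of the quasigeodesic through $v_0$. The image of the latter is the union of the geodesic segments $g^n(\gamma)$, each of length $l$, so every point of it lies within $l$ of an orbit point $g^n(v_0)$. Hence each $v\in P_g$ satisfies $d(v,g^n(v_0))\leq B+l$ for some $n\in\Z$, and applying the isometry $g^{-n}$ gives $d(g^{-n}(v),v_0)\leq B+l$. Thus every orbit meeting $P_g$ has a representative in the closed ball of radius $B+l$ about $v_0$.

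Finally, because $X$ is locally finite this ball contains only finitely many vertices, so only finitely many $\langle g\rangle$-orbits can meet it; this bounds the number of orbits in $P_g$ and completes the argument. The one point to handle with care is that the stability constant $B$ must not depend on the choice of $v$: this is precisely why it matters that every $v\in P_g$ realises the \emph{same} minimal displacement $l$, forcing all the quasigeodesics through the orbits in $P_g$ to share the constants $(l,3l)$ and hence a common neighbourhood bound.
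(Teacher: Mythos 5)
Your proposal is correct and follows essentially the same route as the paper's proof: both use the fact that every vertex of $P_g$ realises the same minimal displacement $l$ to build $(l,3l)$-quasigeodesics with common endpoints $g^\pm$, apply the stability lemma (Lemma \ref{mann}) with the uniform constant $B(l,3l,\delta)$ to drag a representative of each orbit into the ball of radius $B+l$ about $v_0$, and conclude by local finiteness. Your explicit remarks on why the constants are uniform and why the endpoints agree are exactly the points the paper uses implicitly.
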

\begin{proof}
Given a vertex $v_0$ in $P_g$ of minimal displacement $l$, create a
$(l,3l)$-quasigeodesic $\gamma_0$ as above from the orbit of $v_0$
under $\langle g\rangle$. Now let $B$ be the value $B(l,3l,\delta)$
in Lemma \ref{mann}
and take any other vertex $v\in P_g$ which is
not in the orbit of $v_0$ (if there are any, else we are done).
On setting $\gamma$ to be another $(l,3l)$-quasigeodesic, this time
obtained from the orbit of $v$, we have by
Lemma \ref{mann} that there is a point
$x_v$ on $\gamma_0$ with $d(v,x_v)\leq B$ in the path metric on $X$. But
by construction we have $m\in\Z$ such
that $d(x_v,g^m(v_0))\leq l$ which implies
$d(g^{-m}(v),v_0)=d(v,g^m(v_0))\leq B+l$. Now local finiteness of $X$ means
that the set of vertices having distance at most $B+l$ from $v_0$ is finite,
and we have shown that every vertex in the pseudoaxis $P_g$ lies in the same
orbit as some vertex from this finite set. Thus $P_g$ consists of a finite
number of $\langle g\rangle$-orbits.
\end{proof}
We will use this result to examine centralisers of infinite order elements
in our two classes of groups, as has been done for CAT(0) and other
similar groups (for instance see \cite{bh} II.6.12).
\begin{thm} \label{main}
Suppose that we have a group $H$ acting by automorphisms on the locally finite
hyperbolic graph $X$ and a loxodromic element $g\in H$ which is central in
$H$. Then no elements are parabolic and
there is a normal subgroup $L$ of finite index in $H$ which contains
$g$, along with a homomorphism $\theta:L\rightarrow\Z$ such that:\\
(i) we have $\theta(g)=1$ and\\
(ii) the kernel of $\theta$ consists precisely of the elliptic elements
in $L$ under this action.
\end{thm}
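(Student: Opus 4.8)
The plan is to exploit the centrality of $g$ twice over: once to pin down the boundary dynamics of $H$ and rule out parabolics, and once to convert the combinatorial finiteness of the pseudoaxis $P_g$ from Proposition \ref{fini} into the desired homomorphism. First I would record that, since $g$ is central, every $h\in H$ satisfies $hgh^{-1}=g$ and hence fixes the two-point set $\{g^+,g^-\}$ of $g$ on $\partial X$ setwise, using Proposition \ref{cyc}(ii). To see that no element acts parabolically, I would take an arbitrary $h\in H$ and pass to $h^2$, which fixes both $g^+$ and $g^-$ individually; as $h$ and $h^2$ have the same type and a parabolic isometry fixes exactly one boundary point by Proposition \ref{cyc}(iii) while $h^2$ fixes at least two distinct ones, $h$ cannot be parabolic. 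Thus every element of $H$ is elliptic or loxodromic.

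Next I would build the finite index subgroup $L$. Centrality gives $P_{hgh^{-1}}=h(P_g)$, so $h(P_g)=P_g$ and $P_g$ is $H$-invariant; moreover, since $\langle g\rangle$ is central, $h$ carries each orbit $\langle g\rangle v$ in $P_g$ to the orbit $\langle g\rangle h(v)$, so $H$ permutes the $\langle g\rangle$-orbits of $P_g$. By Proposition \ref{fini} there are only finitely many such orbits, say $k$ of them, giving a homomorphism $H\to\Sigma_k$ whose kernel $L$ is normal, of finite index, and contains $g$ since $g$ fixes each orbit.

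I would then fix a vertex $v_0\in P_g$ lying in one chosen orbit $O_1$. Because $g$ is loxodromic it acts freely on $V(X)$, so $\langle g\rangle$ acts freely and transitively on $O_1$, and for each $h\in L$, which preserves $O_1$, there is a unique integer $\theta(h)$ with $h(v_0)=g^{\theta(h)}(v_0)$. The key computation is that $\theta$ is a homomorphism: for $h_1,h_2\in L$ centrality gives $h_1h_2(v_0)=h_1(g^{\theta(h_2)}v_0)=g^{\theta(h_2)}h_1(v_0)=g^{\theta(h_1)+\theta(h_2)}(v_0)$. Clearly $\theta(g)=1$, giving (i). For (ii), if $\theta(h)=0$ then $h$ fixes the vertex $v_0$ and so is elliptic; conversely if $\theta(h)=m\neq0$ then $h^n(v_0)=g^{mn}(v_0)$ and $d(h^n(v_0),v_0)\geq\tau(g)|mn|\to\infty$, so $h$ has an unbounded orbit and is not elliptic. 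Hence $\ker\theta$ is exactly the set of elliptic elements of $L$.

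The main obstacle is recognising that it is centrality, not merely normality of $\langle g\rangle$, that both makes $\theta$ additive and forces $H$ to preserve the endpoint pair; once this is isolated, the finiteness input from Proposition \ref{fini} supplies the finite index subgroup and the remaining verifications are routine. A secondary point to handle with care is the no-parabolics claim, which relies on the boundary-fixed-point characterisation of the three isometry types rather than on any metric estimate.
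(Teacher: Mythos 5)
Your proposal is correct and follows essentially the same route as the paper: $H$-invariance of the pseudoaxis $P_g$, Proposition \ref{fini} giving finitely many $\langle g\rangle$-orbits, the kernel $L$ of the induced permutation action on those orbits, and centrality of $g$ forcing $L$ to act by translations on a single orbit, which yields $\theta$. Your two small variations are cosmetic: you rule out parabolics by passing to $h^2$, which fixes $g^+$ and $g^-$ individually (the paper instead notes that a bijection with exactly two fixed points cannot commute with one having exactly one), and you verify additivity of $\theta$ directly from $h_1h_2(v_0)=g^{\theta(h_1)+\theta(h_2)}(v_0)$ rather than via the paper's shift-commuting computation.
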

\begin{proof}
Our group $H$ acts on the boundary $\partial_H X$ with parabolic elements
having one fixed point there, whereas the loxodromic element $g$ has two.
But a bijection with exactly two fixed points cannot commute with a bijection
that has just one.
For any $h\in H$ we have $hgh^{-1}=g$ so that
$P_g$ is $H$-invariant. Now
Proposition \ref{fini} tells us that there are vertices
$v_0,v_1,\ldots ,v_{k-1}\in V(X)$ with
\[P_g=O_0\sqcup \ldots \sqcup O_{k-1}\]
where $O_i=\mbox{Orb}_{\langle g\rangle}(v_i)$. But $g$ being central means
that this decomposition is preserved under the action of $H$, as if
$v=g^m(v_i)$ for some $m$ and $i$ then for any $h\in H$ we have
$h(v)=hg^m(v_i)=g^mh(v_i)$, so that $v$ and $v_i$ being in the same
$\langle g\rangle$-orbit implies that $h(v)$ and $h(v_i)$ are too. Thus
we have a well defined quotient action of $H$ on the $k$ orbits
$O_0,\ldots ,O_{k-1}$ and thus on setting $L$ to be the kernel of this
action (which certainly contains $g$), we have that $L$ fixes setwise these
orbits, so in particular $L(O_0)=O_0$ where $O_0=\{g^m(v_0):m\in\Z\}$.

If $O_0$ were an isometrically embedded, rather than a quasi-isometrically
embedded, copy of $\Z$ then we could argue that $L$ acts by isometries
and so by translations on $O_0$. Instead we now throw away the metric
altogether: we have $L$ acting by permutations on a copy of $\Z$ with $g$
acting centrally as the shift $\sigma$, where $\sigma(n)=n+1$. Then if we
take any element in $L$ which acts on this copy of $\Z$ by the permutation
$f$ say, on setting $N=f(0)$ we have
\[f(n)=f(\sigma^n(0))=\sigma^n(f(0))=n+N\]
so all elements of $L$ also
act on $\Z$ by translations. This action gives us our homomorphism $\theta$
of $L$ onto $\Z$ sending $g$ to 1, and anything in the kernel of $\theta$
fixes the orbit of $v_0$ pointwise. Whilst these elements need not act as
the identity on the whole of $X$, nevertheless they fix vertices and so
are all elliptic. On the other hand the orbit of $v_0$ under any element
of $L$ which is not in the kernel of $\theta$ is unbounded.
\end{proof}

We might wonder if we can take $L$ equal to $H$ in the above. Although we cannot
expect a homomorphism $\theta$ from $H$ onto $\Z$ with $\theta(g)=1$ because
$g$ could be a proper power of some $h \in H$, we can indeed obtain under
exactly the same hypotheses and by
similar methods a homomorphism $\phi$ from $H$ to $\Z$ with $\phi(g)\neq 0$
and such that the kernel of $\phi$ consists of exactly those elements
of $H$ which
act elliptically on $X$. We delay the proof of this result until Corollary
\ref{impr} because in Section 8 we will deduce it from a much more general
statement. However we will feel free to use this stronger version in the next
section, where the results can all be obtained by applying Theorem \ref{main}
but the application of Corollary \ref{impr} avoids having to drop repeatedly
to finite index subgroups.

Note however that this result is certainly not true if we remove the locally
finite
condition on $X$, as we will see in Section 8. What continues to be true
for $X$ a general hyperbolic space is that we obtain a homogeneous
quasimorphism $q$ from $H$ to $\R$ with $q(g)\neq 0$;
see for instance \cite{abhos} Subsection 4.3 and related references.

We finish this section by pointing out the (folklore) result that there is
no purely loxodromic action of $\Z\times\Z$ on any hyperbolic space which
is also metrically proper (as opposed to the topologically proper example
in Section 3). Indeed there can be no such action with a single
loxodromic element. This result is akin to the fact that the
centraliser of any infinite order element in a hyperbolic group is virtually
cyclic and follows from Lemma \ref{mann}.
\begin{prop} \label{zprop}
Suppose the infinite order element $g$ is central in the group $G$ which
acts metrically properly by isometries on some hyperbolic metric space $X$.
If $g$ acts loxodromically on $X$ then $G$ is virtually cyclic.
\end{prop}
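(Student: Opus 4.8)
The plan is to use the loxodromic element $g$ to build a quasi-axis, exploit centrality to show that $g$-commuting isometries must fellow-travel this single quasi-axis, and then use metric properness to bound how many such translates can occur. First I would note that, as $g$ is loxodromic, Proposition \ref{cyc}(ii) gives it exactly two fixed points $g^\pm$ on $\partial X$. Since $g$ is central, every $h\in G$ satisfies $hgh^{-1}=g$, so $h$ commutes with $g$ and therefore permutes the fixed-point set $\{g^+,g^-\}$ of $g$ on $\partial X$. Passing to the subgroup $G_0$ of index at most $2$ that fixes both $g^+$ and $g^-$, it suffices to show that $G_0$ (and hence $G$) is virtually cyclic, so I may assume every element fixes $g^\pm$.

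Next, fix a basepoint $x_0$ and, exactly as in the discussion preceding Lemma \ref{mann}, interpolate the orbit $\{g^n(x_0):n\in\Z\}$ to a continuous $(l,3l)$-quasigeodesic $\gamma$ with endpoints $g^\pm$ (replacing $g$ by a suitable power $g^N$ if the displacement $l$ is less than $1$, which is harmless since $\langle g^N\rangle$ has finite index in $\langle g\rangle$ and $g^N$ has the same boundary fixed points by Proposition \ref{cyc}(ii)). For any $h\in G_0$ the image $h(\gamma)$ is again an $(l,3l)$-quasigeodesic, and because $h$ commutes with $g$ this image is precisely the interpolated $\langle g\rangle$-orbit of $h(x_0)$, whose endpoints are $h(g^\pm)=g^\pm$. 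Thus by the stability Lemma \ref{mann} there is a constant $B=B(l,3l,\delta)$ with $h(\gamma)$ contained in the $B$-neighbourhood of $\gamma$.

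Finally I would cash this in using metric properness. Since $x_0\in\gamma$ we have $h(x_0)\in h(\gamma)$, so $h(x_0)$ lies within $B$ of some $g^n(x_0)$; equivalently $d\big((g^{-n}h)(x_0),x_0\big)=d\big(h(x_0),g^n(x_0)\big)\leq B$. Metric properness says the set $F=\{k\in G: d(k(x_0),x_0)\leq B\}$ is finite, so $g^{-n}h\in F$ and hence $h\in\langle g\rangle F$. Therefore $G_0\subseteq\langle g\rangle F$ is covered by finitely many right cosets of $\langle g\rangle$, which forces $[G_0:\langle g\rangle]\leq|F|<\infty$. Since $g$ has infinite order and $G_0$ has finite index in $G$, the infinite cyclic group $\langle g\rangle$ has finite index in $G$, so $G$ is virtually cyclic.

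The crux, rather than any one calculation, is the middle step: recognising that it is centrality of $g$ (not merely that $h$ fixes $g^\pm$) that guarantees $h(\gamma)$ is a quasigeodesic with the same endpoints, so that Lemma \ref{mann} applies, and then observing that fellow-travelling combined with metric properness bounds the number of ``parallel'' translates of the quasi-axis by the finite set $F$. Everything else is routine bookkeeping about cosets and finite index.
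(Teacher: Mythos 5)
Your proof is correct and follows essentially the same route as the paper's: interpolate the $\langle g\rangle$-orbit into a quasigeodesic, use centrality to get a quasigeodesic through $h(x_0)$ with the \emph{same} constants and endpoints, apply Lemma \ref{mann} to fellow-travel, and invoke metric properness to cover $G$ by finitely many cosets of $\langle g\rangle$. The only cosmetic differences are your (harmless but unnecessary) passage to the index-two subgroup fixing $g^{\pm}$ pointwise, and the constant should really be $B+l$ rather than $B$ since points of $\gamma$ lie on interpolating segments, exactly as the paper absorbs into its constant $C$.
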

\begin{proof}
We create a quasigeodesic from the $\langle g\rangle$-orbit of any point
$x_0\in X$ as in Section 6 and then, for any $h\in G$, we create another
one starting with $h(x_0)\in X$. By Lemma \ref{mann} we have $C>0$
which is independent of $h$ such that there is $m\in\Z$ (which does
depend on $h$) with
\[d(h(x_0),g^m(x_0))=d(x_0,h^{-1}g^m(x_0))\leq C.\]
Thus on applying the metrically proper condition of the action of $G$ at
$x_0$, there is $g_1,\ldots ,g_k\in G$ such that $h^{-1}g^m$ is equal
to one of these elements. Hence $h\in\langle g\rangle g_1^{-1}\cup
\ldots \cup \langle g\rangle g_k^{-1}$ and so $G$ is virtually cyclic.
\end{proof}

\section{Investigation of small subgroups}

We now consider the abelian and more generally the soluble groups in
$\mathcal M$ and $\mathcal M_0$. The perfect result, as occurs for
hyperbolic and CAT(0) groups, would say that any virtually
soluble subgroup (a ``small'' subgroup) of a group $G$ in $\mathcal M$
or $\mathcal M_0$ (thus here the subgroup would also be in $\mathcal M$
or $\mathcal M_0$) is in fact virtually abelian, finitely generated and
undistorted in $G$ if $G$ is finitely generated. However as all
countable torsion groups are in $\mathcal M$ and $\mathcal M_0$ by
default, we already have a range of abelian and soluble groups in our two
classes which are not finitely generated. Therefore, in first looking
at possible restrictions on abelian groups, our question should
be phrased as: suppose $A$ is an abelian group in $\mathcal M$ or
$\mathcal M_0$ which is torsion free then what can we say
about $A$? In particular is it finitely generated? This is clearly not
true in $\mathcal M$ as this class allows isometric actions which are
not by automorphisms, such as any countable group acting freely
by isometries on $\R^n$ (by Proposition \ref{nomp} because
all fixed point free isometries of $\R^n$ are loxodromic).
As an example, we see that the free abelian group $\Z^\infty$ (a restricted
direct product) is in $\mathcal M$ as it acts freely by isometries on $\R$
by taking infinitely many real numbers $r_1,r_2,\ldots$ that are linearly
independent over $\Q$ and then forming the group
\[\langle x\mapsto x+r_1,x\mapsto x+r_2,\ldots \rangle\]
in which every non-identity element is loxodromic.

However if we stick to $\mathcal M_0$ then we do get the best possible result
for abelian groups.

\begin{thm} \label{abl}
Consider the class of countable abelian groups $A$ which act purely
loxodromically by automorphisms
on some finite product $P$ of locally finite hyperbolic graphs.
If $T$ is the torsion
subgroup of $A$ then $A$ is in this class if and only if $A/T$ is
is finitely generated.
\end{thm}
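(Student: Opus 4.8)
The plan is to prove the two implications separately, the reverse implication being the substantial one.

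For the direction where $A/T$ is finitely generated, note that $A/T$ is a finitely generated torsion-free abelian group, hence isomorphic to $\Z^n$ for some $n$. The extension $1\to T\to A\to \Z^n\to 1$ therefore splits (as $\Z^n$ is free), giving $A\cong T\oplus\Z^n$. The countable torsion group $T$ lies in the class by default, since all its elements have finite order and it acts by automorphisms on its hyperbolic cone with every element elliptic; and $\Z^n$ acts by translations on the product $L^n$ of $n$ simplicial lines, where every nontrivial element translates some factor and is thus loxodromic on $L^n$ by Proposition \ref{type}. Invoking closure of the class under finite direct products (established above) then places $A$ in the class.

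For the converse, suppose $A$ acts purely loxodromically by automorphisms on $P=\Gamma_1\times\cdots\times\Gamma_k$. First I would pass to the finite index subgroup $H$ of $A$ supplied by Corollary \ref{ismprd}, which preserves each factor and acts on it by graph automorphisms. Since $HT/T\cong H/(H\cap T)$ has finite index in $A/T$, and $H\cap T$ is exactly the torsion subgroup of $H$, the group $A/T$ is finitely generated precisely when $H/(H\cap T)$ is, so it suffices to treat $H$. For each $i$ let $\pi_i:H\to \mathrm{Aut}(\Gamma_i)$ be the projection, and let $S$ be the set of indices $i$ for which some element of $H$ acts loxodromically on $\Gamma_i$. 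For $i\in S$ the abelian image $\pi_i(H)$ contains a central loxodromic element, so Corollary \ref{impr}, applied to its action on the locally finite hyperbolic graph $\Gamma_i$, yields a homomorphism $\phi_i:\pi_i(H)\to\Z$ whose kernel is exactly the set of elliptic elements of $\pi_i(H)$. Using \ref{impr} rather than Theorem \ref{main} here avoids dropping to further finite index subgroups. Setting $\psi_i=\phi_i\circ\pi_i$, I then assemble $\Psi=(\psi_i)_{i\in S}:H\to\Z^{|S|}$.

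The crux is to identify $\ker\Psi$ with $H\cap T$. If $h$ has infinite order then pure loxodromicity forces $h$ to act loxodromically on $P$, hence by Proposition \ref{type} loxodromically on some $\Gamma_j$; then $j\in S$, the element $\pi_j(h)$ is loxodromic and so lies outside $\ker\phi_j$, giving $\psi_j(h)\neq 0$ and $h\notin\ker\Psi$. Conversely every torsion element of $H$ projects to a finite order, hence elliptic, element on each factor and so lies in every $\ker\phi_i$. Thus $\ker\Psi=H\cap T$, so $H/(H\cap T)$ embeds in $\Z^{|S|}$ and is finitely generated, whence $A/T$ is finitely generated by the reduction above. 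The main obstacle is precisely this factorwise construction and reassembly: once Corollary \ref{impr} separates loxodromic from elliptic behaviour on each active factor, the only care required is to check that infinite order elements are never lost (they are caught on whichever factor witnesses their loxodromicity) and that torsion elements, being elliptic everywhere, are never spuriously detected.
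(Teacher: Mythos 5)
Your proof is correct and rests on the same two pillars as the paper's — passing to a factor-preserving finite-index subgroup and applying Corollary \ref{impr} factor by factor (any loxodromic element of an abelian image is automatically central), with Proposition \ref{type} and pure loxodromicity guaranteeing that every infinite-order element is detected on some factor — but your converse direction is organised differently, and more economically. The paper does not simply take all factors admitting a loxodromic element: it first prunes the product (discarding any $\Gamma_i$ for which no element is loxodromic on $\Gamma_i$ while non-loxodromic on all the others) so as to manufacture elements $a_1,\ldots,a_m$ with $a_i$ loxodromic on $\Gamma_i$ and elliptic on every other surviving factor; the combined map $\theta:L\to\Z^m$ then has image of \emph{finite index} in $\Z^m$, giving $L/(L\cap T)\cong\Z^m$ outright. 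You skip the pruning and the construction of the $a_i$ entirely, observing that the mere embedding $H/(H\cap T)\hookrightarrow\Z^{|S|}$ already forces finite generation, since subgroups of finitely generated free abelian groups are finitely generated. That is enough for the theorem as stated, and even recovers the subsequent corollary $m\le n$ (a subgroup of $\Z^{|S|}$ has rank at most $|S|\le n$); what the paper's heavier construction buys is exactly the data — the basis elements $a_i$ and the finite-index image — that are reused verbatim in the proofs of Corollary \ref{unds} and Theorem \ref{solm}. Your forward direction (splitting $A\cong T\oplus\Z^n$ and invoking closure under direct products) is a harmless variant of the paper's direct construction via the quotient $A\to A/T$ acting on $\R^k$ crossed with the hyperbolic cone. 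One small caveat: when some factor is a simplicial line, Corollary \ref{ismprd} by itself only yields Euclidean isometries on the grouped $\R$-factor, so to claim $H$ acts by graph automorphisms on \emph{every} factor you must also use the standing hypothesis that the action is by automorphisms of the product graph structure — which is exactly how the paper's Section 4 discussion, cited in its proof, handles it.
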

\begin{proof}
If $A/T$ is finitely generated, thus equal to $\Z^k$ because it is torsion
free, then we can let $A$ act on $\R^k$ by integral translations (thus
by automorphisms on the product of $k$ copies of the simplicial line)
with $T$ acting trivially. We can then take the standard action of $A$
on its hyperbolic cone $X$, whereupon the corresponding action of
$A$ on $\R^k\times X$ shows that $A$ is in $\mathcal M_0$.

Now suppose that $A$ is a countable abelian group which does have
a purely loxodromic 
action by automorphisms on $P=\Gamma_1\times\ldots \times\Gamma_n$ say.
We further assume
that there is some infinite order element in $A$ (else $A/T$ is trivial).
As mentioned in Section 4 we can replace $A$ with a finite
index subgroup $L$ which preserves factors, whereupon we still
have that all infinite order elements are loxodromic and that some
element of $L$ acts loxodromically on this smaller product. Now for
each $1\leq i\leq n$ in turn, we take an element $a_i\in L$ which acts
loxodromically on $\Gamma_i$ but which does not act
loxodromically on all other graphs $\Gamma_j$.
If in this process we find for some $i$ that no such element
$a_i$ exists then we can remove $\Gamma_i$ from the product and consider
the action of $L$ on
\[\Gamma_1\times\ldots\times\Gamma_{i-1}\times\Gamma_{i+1}\times \ldots
\times \Gamma_n\]
which still preserves factors. Moreover we still have that every infinite
order element acts loxodromically and that there is such an element.

Having finished this process, we relabel and renumber to get an action of
$L$ on a new product $Q=\Gamma_1\times\ldots\times\Gamma_m$ for $m\leq n$
where there are infinite order elements $a_1,\ldots ,a_m$ which have the
property that $a_i$ acts loxodromically on $\Gamma_i$ and elliptically
on $\Gamma_j$ for $i\neq j$ (because $a_i$ and $a_j$ commute).
The existence of a 
loxodromic element initially ensures here that $m>0$.

For each $i$ from 1 to $m$
we now apply Corollary \ref{impr} to the group $L$, the central element being
$a_i$ and the graph $X$ equal to $\Gamma_i$. We obtain a
homomorphism $\theta_i:L\rightarrow\Z$ with
$\theta_i(a_i)\neq 0$ and with $\mbox{Ker}(\theta_i)$ consisting of the elements
in $L$ acting elliptically on $\Gamma_i$.
Combining these for $1\leq i\leq m$ gives us
a homomorphism $\theta:=\theta_1\times\ldots\times
\theta_m:L\rightarrow\Z^m$ with image having finite index in $\Z^m$
(because $\langle \theta(a_1),\ldots ,\theta(a_m)\rangle$ has finite index)
and $K:=\mbox{Ker}(\theta)$ consisting of
elements in $L$ which are elliptic on each factor graph. This means that they
will be elliptic elements in the action on the new product $Q$ by
Proposition \ref{type}. As every infinite order element of $L$ acts
loxodromically on our original product $P$ and indeed on the new product
$Q$ by construction,
we have that $K$ consists of finite order elements, but
a finite order element of $L$ would be in $K$ anyway, thus $K=L\cap T$.
Thus $\theta$
is a homomorphism from $L$ to 
$\Z^m$ with image having finite index in $\Z^m$, hence also isomorphic
to $\Z^m$, and
kernel the torsion subgroup $L\cap T$. So we have
$\Z^m\cong L/(L\cap T)\cong LT/T$, but $LT$ is certainly a finite
index subgroup of the original group $A$.
Thus $A/T$ has $LT/T\cong \Z^m$ as a finite index subgroup and thus is
itself finitely generated and (being torsion free)
isomorphic to $\Z^m$.
\end{proof}

We obtain two corollaries on abelian (sub)groups in $\mathcal M_0$
from this result, of which the first is immediate from the above proof.
\begin{co} If $A\cong\Z^m$ acts purely loxodromically
by automorphisms on the product
$\Gamma_1\times\ldots\times\Gamma_n$ of locally finite hyperbolic
graphs then $m\leq n$.
\end{co}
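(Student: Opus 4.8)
The plan is to read the bound $m \le n$ directly off the construction carried out in the proof of Theorem \ref{abl}. First I would recall the setup: starting from $A \cong \Z^m$ acting purely loxodromically by automorphisms on the product $\Gamma_1 \times \ldots \times \Gamma_n$, the theorem's proof passes to a finite index subgroup $L$ preserving factors, discards those factors on which no element acts loxodromically while being elliptic elsewhere, and relabels to obtain an action on a sub-product $Q = \Gamma_1 \times \ldots \times \Gamma_{m'}$ with $m' \le n$. Here I must be careful to disambiguate notation, since the theorem used $m$ for the number of surviving factors; in the corollary $m$ is the rank of $A$, so let me call the number of surviving factors $m'$.

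The key point is what the theorem's proof actually establishes about $L$. It produces a homomorphism $\theta = \theta_1 \times \ldots \times \theta_{m'} : L \to \Z^{m'}$ whose image has finite index in $\Z^{m'}$ and whose kernel is exactly $L \cap T$, the torsion subgroup of $L$. Since $A \cong \Z^m$ is torsion free, $T$ is trivial, so $\theta$ is injective on $L$ with image of finite index in $\Z^{m'}$; hence $L \cong \Z^{m'}$. But $L$ is a finite index subgroup of $A \cong \Z^m$, and a finite index subgroup of $\Z^m$ is itself free abelian of rank exactly $m$. Comparing ranks gives $m = m'$, and since $m' \le n$ by the factor-discarding step, we conclude $m \le n$.

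So the proof is essentially an observation: I would simply note that in the torsion-free case $A \cong \Z^m$ the proof of Theorem \ref{abl} shows $L \cong \Z^{m'}$ with $m' \le n$, that $L$ has finite index in $A$ forces $m' = m$ by rank invariance of free abelian groups under passage to finite index subgroups, and therefore $m \le n$. The main (and only) obstacle is bookkeeping: making sure the reader sees that the integer called $m$ in the theorem (surviving factors) equals the rank of $A$ precisely because $A$ is torsion free, so that no information is lost in the finite index reduction. There is no new geometry to do here; everything needed is already contained in the chain of isomorphisms $\Z^{m'} \cong L/(L\cap T) \cong LT/T$ appearing at the end of the theorem's proof, specialized to $T = \{1\}$.

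\begin{proof}
This is immediate from the proof of Theorem \ref{abl}. There, starting from the given action, one passes to a finite index subgroup $L$ and discards factors to obtain an action of $L$ on a sub-product $Q = \Gamma_1 \times \ldots \times \Gamma_{m'}$ with $m' \le n$, together with a homomorphism $\theta : L \to \Z^{m'}$ whose image has finite index in $\Z^{m'}$ and whose kernel is $L \cap T$. Since $A \cong \Z^m$ is torsion free we have $T = \{1\}$, so $L \cong \Z^{m'}$. But $L$ is of finite index in $\Z^m$, hence free abelian of rank exactly $m$, giving $m = m' \le n$.
\end{proof}
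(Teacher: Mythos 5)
Your proposal is correct and matches the paper's intent exactly: the paper states this corollary is ``immediate from the above proof'' of Theorem \ref{abl}, and your argument is precisely the careful unpacking of that immediacy, reading off the chain $\Z^{m'}\cong L/(L\cap T)\cong LT/T$ with $T$ trivial and invoking rank invariance of free abelian groups under passage to finite index subgroups. Your disambiguation of the two roles of $m$ (rank of $A$ versus number of surviving factors) is exactly the bookkeeping the paper leaves to the reader.
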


Note that we cannot remove either ``by automorphisms'' or ``locally finite''
in this statement, as evidenced by the group $\Z^\infty$ with the
action above for the former, and the example in Section 3 of $\Z\times\Z$
acting purely loxodromically on a hyperbolic graph for the latter.

We can also look at distortion of finitely generated abelian subgroups.
\begin{co} \label{unds}
If $G$ is a finitely generated group in $\mathcal M_0$ containing a finitely
generated abelian subgroup $A$ then $A$ is undistorted in $G$.
\end{co}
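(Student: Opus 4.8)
The plan is to reduce the statement to the construction already carried out in the proof of Theorem \ref{abl}, and then to observe that the homomorphisms produced there are Lipschitz with respect to the word metric on $G$. First I would pass to a convenient finite index subgroup: writing $A\cong\Z^m\oplus T$ with $T$ finite, undistortion of $A$ in $G$ is equivalent to undistortion of the free part $\Z^m$, and replacing this by a further finite index subgroup alters word length only up to a bounded multiplicative and additive error. I may therefore assume $A\cong\Z^m$ is free abelian and, exactly as in the proof of Theorem \ref{abl}, that $A$ preserves the factors of $P=\Gamma_1\times\ldots\times\Gamma_n$ and that (after discarding redundant factors and relabelling) there are infinite order elements $a_1,\ldots,a_m\in A$ with $a_i$ acting loxodromically on $\Gamma_i$ and elliptically on $\Gamma_j$ for $j\neq i$.

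Since $A$ is abelian, each $a_i$ is central in $A$, so Theorem \ref{main} (in the sharpened form of Corollary \ref{impr}) applies with $H=A$, $g=a_i$ and $X=\Gamma_i$; after intersecting the finitely many resulting finite index subgroups I obtain a single finite index $L\le A$ carrying homomorphisms $\theta_i\colon L\to\Z$ whose kernels are the elements acting elliptically on $\Gamma_i$. The crucial quantitative point, which is where the real work lies, is to bound $|\theta_i(h)|$ by a graph displacement. Recall from the proof of Theorem \ref{main} that $\theta_i(h)$ is the integer by which $h$ shifts the pseudoaxis orbit of $a_i$, that is $h(v_0^{(i)})=a_i^{\theta_i(h)}(v_0^{(i)})$ for a suitable base vertex $v_0^{(i)}$. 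Because $a_i$ is loxodromic the stable translation length $\tau_i=\tau(a_i)>0$ is the infimum of its displacement sequence, so
\[d_{\Gamma_i}\!\left(h(v_0^{(i)}),v_0^{(i)}\right)=d_{\Gamma_i}\!\left(a_i^{\theta_i(h)}(v_0^{(i)}),v_0^{(i)}\right)\ge \tau_i\,|\theta_i(h)|.\]

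Now fix the base point $w_0\in P$ whose $i$-th coordinate is $v_0^{(i)}$. Using the $\ell_2$ product metric we have $d_{\Gamma_i}(h(v_0^{(i)}),v_0^{(i)})\le d_P(h(w_0),w_0)$ for each $i$, and since $G$ is finitely generated and acts by isometries the orbit map is Lipschitz, giving $d_P(h(w_0),w_0)\le D\,\|h\|_G$ for $D=\max_{s\in S}d_P(s(w_0),w_0)$ over a finite generating set $S$. Combining the last displays yields $\sum_i|\theta_i(h)|\le\big(\sum_i\tau_i^{-1}\big)D\,\|h\|_G$ for all $h\in L$. The combined map $\Theta=(\theta_1,\ldots,\theta_m)\colon L\to\Z^m$ has kernel consisting of elements elliptic on every factor, hence of finite order by Proposition \ref{type} and the purely loxodromic hypothesis, hence trivial as $L$ is torsion free; being an injective homomorphism between finitely generated free abelian groups it is a quasi-isometric embedding, so $\|h\|_L\asymp\|\Theta(h)\|\le C\,\|h\|_G$. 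This is exactly the lower bound needed for $L\hookrightarrow G$ to be a quasi-isometric embedding, the reverse inequality being automatic, and undistortion of $L$ transfers back to $A$ through the finite index inclusion.

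The main obstacle, as indicated, is the middle step: one must identify the algebraically defined homomorphism $\theta_i$ from Theorem \ref{main} with the literal shift amount along the pseudoaxis orbit and then feed in the loxodromic lower bound, since it is only this metric interpretation, and not the homomorphism property alone, that controls $|\theta_i(h)|$ by the graph displacement. Once that comparison is in place, the passage to the product metric, the Lipschitz orbit map and the finite index bookkeeping are all routine.
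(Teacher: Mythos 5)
Your proof is correct, and it shares the paper's overall skeleton --- the reduction to $A$ free abelian, the invocation of the construction from the proof of Theorem \ref{abl} (factor preservation, discarding of redundant factors, the elements $a_1,\ldots,a_m$, and injectivity of the combined homomorphism on a torsion-free group) --- but the core undistortion estimate is carried out by a genuinely different mechanism. The paper never uses the metric meaning of the homomorphisms $\theta_i$: instead it passes to the subgroup $L_0=\langle a_1,\ldots,a_m\rangle$ (shown to have finite index via injectivity of $\theta$) and proves directly, by the triangle inequality, that the displacement of $a=a_1^{n_1}\cdots a_m^{n_m}$ at a basepoint of $Q$ is at least $\frac{1}{\sqrt m}\left(K(|n_1|+\cdots+|n_m|)-\epsilon\right)$; there the loxodromic lower bounds $d_i(a_i^n(y_i),y_i)\ge M_{ii}|n|$ handle the diagonal terms and, crucially, uniform ellipticity constants $M_{ij}$ are needed to control the cross terms $d_i(a_j^{n_j}(x_i),x_i)$. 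You instead bound the algebraic quantity $|\theta_i(h)|$ from above by a displacement, using the identification $h(v_0^{(i)})=a_i^{\theta_i(h)}(v_0^{(i)})$ (valid for $h$ in your intersected finite index subgroup, since in the proof of Theorem \ref{main} the subgroup $L$ acts on the orbit $O_0\cong\Z$ by translations with $g$ translating by $1$) together with the stable translation length bound $d(a_i^N(v_0^{(i)}),v_0^{(i)})\ge\tau_i|N|$, and you then finish with the linear-algebra fact that an injective homomorphism between finitely generated free abelian groups is a quasi-isometric embedding. This buys a cleaner argument in that no quantitative ellipticity constants are needed (ellipticity enters only qualitatively, through injectivity of $\Theta$, which is inherited from the discarding process exactly as in the paper), and you work with all of $L$ rather than the basis subgroup $L_0$; the price is that your key step is not deducible from the statements of Theorem \ref{main} or Corollary \ref{impr} alone --- you must re-open the proof of Theorem \ref{main} to extract the shift interpretation of $\theta_i$, exactly as you acknowledge. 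Both routes then close with the same routine observations: the $\ell_2$ product metric dominates each factor metric, the orbit map of the finitely generated group $G$ is Lipschitz, and undistortion passes through finite index subgroups.
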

\begin{proof}
It is enough to show this for any finite index subgroup of $A$, thus we can
assume without loss of generality that $A$ is free abelian and hence
torsion free.
On applying
the proof of Theorem \ref{abl} to the action of $A$ given by the restriction
of the action of $G$, we obtain the finite index subgroup $L$ of $A$
which has an action
preserving factors and by isometries
on the product $Q=\Gamma_1\times \ldots\times \Gamma_m$,
where we have that $\Gamma_i$ has the path metric $d_i$ and $Q$ has the
corresponding $\ell_2$ metric $d_Q$. Moreover our homomorphism
$\theta:L\rightarrow\Z^m$ is now injective because $A$ and hence $L$
is torsion free. We also have elements $a_1,\ldots ,a_m\in L$ where
$a_i$ acts loxodromically on $\Gamma_i$ and elliptically on $\Gamma_j$
for $j\neq i$. Now the subgroup
$L_0:=\langle a_1,\ldots ,a_m\rangle$ of $L$ has  $a_1,\ldots ,a_m$ as a
free basis and injectivity of $\theta$ means that $L_0$
has finite index in $L$ and so in $A$. We will now show that $L_0$
is undistorted in $G$.

We take basepoints $x_1\in\Gamma_1,\ldots ,x_m\in\Gamma_m$,
and set ${\bf x}=(x_1,\ldots ,x_m)$. As the basis element $a_i$ of $L_0$
is loxodromic when acting on $\Gamma_i$ and elliptic when acting on the
other factors, there are constants $M_{ij}>0$ for $1\leq i,j\leq m$
such that if $i\neq j$ then for all $n\in\Z$ we have
\[d_i(a_j^n(x_i),x_i)\leq M_{ij}\]
whereas for all points $y_i\in\Gamma_i$ and $n\in\Z$ we have
\[d_i(a_i^n(y_i),y_i)\geq M_{ii}|n|.\]

Thus in $Q$ we have for any element $a=a_1^{n_1}\ldots a_m^{n_m}$ of $L_0$ that
\[
d_Q(a({\bf x}),{\bf x})\geq
\frac{1}{\sqrt{m}}\sum_{i=1}^m
d_i(a_i^{n_i}a_1^{n_1}\ldots a_{i-1}^{n_{i-1}}a_{i+1}^{n_{i+1}}\ldots
a_m^{n_m}(x_i),x_i):=S_a.\]
But for any isometries $g_1,\ldots ,g_k,f$ of a metric space $(X,d)$
with $x\in X$, the triangle
inequality gives us
\[d(fg_1\ldots g_k(x),x)\geq d(f(x),x)-d(g_1(x),x)-\ldots -d(g_k(x),x).\]
Putting this into the above gives us that
\begin{eqnarray*}
S_a &\geq &\frac{1}{\sqrt{m}}\sum_{i=1}^m\left(
d_i(a_i^{n_i}(x_i),x_i)-\sum_{j\neq i}
d_i(a_j^{n_j}(x_i),x_i)\right)\\
& \geq &\frac{1}{\sqrt{m}}\sum_{i=1}^m\left(M_{ii}|n_i|
-\sum_{j\neq i}  M_{ij}   \right)\\
&\geq & \frac{1}{\sqrt{m}}
\left(K_1|n_1|-\epsilon_1+\ldots +K_m|n_m|-\epsilon_m\right)
\end{eqnarray*}
where $K_i:=M_{ii}>0$ and
\[\epsilon_i:=\sum_{j\neq i} M_{ij}.\]
This gives us that
\[S_a\geq \frac{1}{\sqrt{m}} \left(K(|n_1|+\ldots +|n_m|)-\epsilon\right)\]
where $K=\mbox{min}\{K_1,\ldots ,K_m\}>0$ and
$\epsilon=\epsilon_1+\ldots +\epsilon_m$. But $|n_1|+\ldots +|n_m|$ is the
word length of $a$ in the obvious word metric on $L_0$. This means that the
action of $L_0$ and hence $A$ on $Q$ is undistorted, thus $A$ is undistorted
in any finitely generated group $G$ with an action on $Q$ that restricts
to this action on $A$.
\end{proof}

We now give a few examples, based on the work of Conner in \cite{cn1},
\cite{cn2}, to show that we cannot have such straightforward results for
small subgroups in $\mathcal M$.\\
\hfill\\
Example 1: We have already seen that $\Z^\infty$ is in $\mathcal M$ and the
wreath product $\Z\,\wr\,\Z$, a metabelian group, can be thought of as the
semidirect product
\[\Z^\infty \rtimes\Z
=\langle t,a_i\,(i\in\Z)\,|ta_jt^{-1}=a_{j+1},[a_k,a_l]\,(j,k,l\in\Z)\rangle.\]
On taking an angle $\theta$ such that $e^{i\theta}$ is transcendental, we can
consider the Euclidean isometry $t(z)=ze^{i\theta}$ of $\C$, which is a
rotation about 0, along with the translation $a_0(z)=z+1$. Then $t$ and
$a_0$ generate a faithful copy of $\Z\,\wr\,\Z$ which has an isometric action
on $\R^2$ where the subgroup $\Z^\infty$ acts purely loxodromically. For the
other elements, which will all be rotations in this action, we can also
consider the isometric action of $\Z\,\wr\,\Z$ on $\R$ where this time
$t$ becomes the translation $t(x)=x+1$ and $\Z^\infty$ acts trivially.
Finally we can take the isometric action of $\Z\,\wr\,\Z$ on the product
$\R^2\times \R\times X$ of locally finite hyperbolic graphs
$\R^2\times\R\times X$ (where as before $X$ is the hyperbolic cone), which
shows that this group is in $\mathcal M$.\\
\hfill\\
Example 2: To obtain polycyclic groups which are not virtually abelian
but which are in $\mathcal M$, we note that \cite{cn1} Lemma 3.5 states that
if for $n\geq 2$ we have $\phi\in GL(n,\Z)$ is irreducible and has an
eigenvalue on the unit circle then $G=\Z^n\rtimes_\phi\Z$ has a faithful
isometric action on $\R^3$ by translations.\\
\hfill\\
Example 3: A specific example of the above given in \cite{cn2} Example 7.1
is $n=4$ with
$\Z^4=\langle a,b,c,d\rangle$ and where $\phi$ sends $a$ to $b$
to $c$ to $d$ to $a^{-1}b^2c^{-1}d^2$. The characteristic equation of the
automorphism $\phi$ is irreducible and
has two complex conjugate eigenvalues on the unit
circle, one real eigenvalue inside and one real eigenvalue $\lambda$
with $|\lambda|>1$. Thus this group is in $\mathcal M$. However the $\Z^4$
subgroup is distorted in $G$: for instance let $t\in G$ be the stable
letter, then we have
\[t^nat^{-n}=a^{\alpha_n}b^{\beta_n}c^{\gamma_n}d^{\delta_n}\] with
$\alpha_n,\ldots ,\delta_n$ determined by the recurrence relation
obtained from the characteristic equation. This means that
$|\alpha_n|,\ldots ,|\delta_n|$ grow exponentially like $\lambda^n$ but
the left hand side has word length $2n+1$.

We finish this section by examining the soluble groups in $\mathcal M_0$.
Whilst even finitely generated soluble groups in $\mathcal M_0$ need not be
virtually abelian, such as $C_p\wr \Z$, this example is not torsion free
or even virtually torsion free. We now show that this is the only
obstruction, obtaining a result which can be compared with Conner's
Theorem 3.2 in \cite{cn2}. This states that if a soluble group of finite
virtual cohomological dimension acts purely loxodromically by isometries
on some metric space then $G$ is virtually metabelian. 

\begin{thm} \label{solm}
A (virtually) soluble group $G$ in $\mathcal M_0$ which is
also (virtually) torsion free is finitely generated and virtually abelian.
\end{thm}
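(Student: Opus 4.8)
The plan is to reduce to the case where $G$ is genuinely soluble and torsion free, and then argue by induction on the number of factor graphs. First, since $\mathcal M_0$ is closed under finite-index subgroups and the conclusion ``finitely generated and virtually abelian'' passes to finite-index supergroups, I may intersect the two finite-index witnesses and assume outright that $G$ is soluble and torsion free. By Corollary \ref{ismprd} I pass to a factor-preserving action, and I trim the product exactly as in the proof of Theorem \ref{abl} so that $G$ acts on $Q=\Gamma_1\times\ldots\times\Gamma_m$ with every nontrivial (hence infinite order) element loxodromic on at least one $\Gamma_i$, by Proposition \ref{type}. The induction is on $m$: the base $m=0$ forces $G$ trivial, since a nontrivial torsion-free element would have to act loxodromically somewhere. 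If $G$ is abelian, Theorem \ref{abl} already gives $G\cong\Z^r$ and we are done, so I assume the derived length $d$ is at least $2$ and set $A:=G^{(d-1)}$, which is abelian, normal, torsion free, and nontrivial, hence $A\cong\Z^r$ by Theorem \ref{abl}.

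The crux is a norm-rigidity argument showing $C_G(A)$ has finite index. For each factor let $\tau_i$ denote stable translation length for the action on $\Gamma_i$; restricted to the abelian group $A$ it is homogeneous and subadditive, so it extends to a seminorm on $A\otimes\R\cong\R^r$, and
\[\|a\|:=\Big(\sum_{i=1}^m \tau_i(a)^2\Big)^{1/2}\]
is a genuine norm because trimming plus pure loxodromicity guarantees $\tau_i(a)>0$ for some $i$ whenever $a\neq 0$. The key observation is conjugation invariance: for $g\in G$ the element $gag^{-1}$ acts on $\Gamma_i$ as the conjugate, by the isometry $g|_{\Gamma_i}$, of the action of $a$, and stable translation length is invariant under conjugation by isometries of the same space, so $\tau_i(gag^{-1})=\tau_i(a)$ and hence $\|gag^{-1}\|=\|a\|$. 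Thus the conjugation representation $G\to GL(r,\Z)=\operatorname{Aut}(A)$ lands inside the group of linear isometries of $(\R^r,\|\cdot\|)$. That isometry group is compact, and its intersection with the discrete group $GL(r,\Z)$ is finite, so the image is finite and $C_G(A)$ has finite index in $G$. I now replace $G$ by $C_G(A)$ (still torsion-free soluble in $\mathcal M_0$, re-trimmed so $m$ does not increase); after this $A$ is central in $G$.

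With $A$ central I peel off a factor. Choose $a\in A$ of infinite order loxodromic on some $\Gamma_i$; since $a$ is central, Corollary \ref{impr} yields a homomorphism $\phi\colon G\to\Z$ with $\phi(a)\neq 0$ whose kernel $K$ is exactly the set of elements acting elliptically on $\Gamma_i$. Then $K$ is a torsion-free soluble group in $\mathcal M_0$ whose every element is elliptic on $\Gamma_i$, so by Proposition \ref{type} it acts purely loxodromically on the product with $\Gamma_i$ deleted, which has $m-1$ factors; the inductive hypothesis makes $K$ finitely generated and virtually abelian. Since $G/K\cong\phi(G)\cong\Z$, the group $G$ is an extension of a finitely generated group by $\Z$ and is therefore finitely generated, which gives finite generation of the original group. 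For virtual abelianness I take a characteristic finite-index free abelian subgroup $B\cong\Z^s$ of $K$, which is then normal in $G$, and apply the same norm-rigidity engine to $B$: the conjugation image of $G$ in $\operatorname{Aut}(B)$ is finite, so a finite-index subgroup of $G$ centralises $B$, and combined with $G/K\cong\Z$ this forces a finite-index subgroup to split as a central extension of $\Z^s$ that is abelian. Hence $G$ is virtually abelian.

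The main obstacle I expect is the norm-rigidity engine, specifically verifying carefully that the combined stable-translation-length function is an honest norm (not merely a seminorm) on $A\otimes\R$ and that it is genuinely invariant under the full conjugation action, since everything downstream — finiteness of the monodromy, and with it both finite generation and virtual abelianness — rests on deducing finiteness of the image in $GL(r,\Z)$ from compactness of a linear isometry group. A secondary point requiring care is bookkeeping in the induction: confirming that the finite-index replacement by $C_G(A)$ never raises the factor count while the passage to $K=\operatorname{Ker}(\phi)$ strictly lowers it, so that the induction on $m$ is well founded.
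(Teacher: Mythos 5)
Your overall scheme (reduce to the soluble torsion-free case, take $A=G^{(d-1)}\cong\Z^r$ via Theorem \ref{abl}, make $A$ central after passing to a finite index subgroup, then peel off a factor with Corollary \ref{impr} and induct on the number of factors) is a legitimate alternative to the paper's argument, which instead minimises the derived length over finite index subgroups and gets a contradiction from a direct product decomposition $K\times N$. However, the step on which everything downstream rests --- your ``norm-rigidity engine'' --- has a genuine gap, exactly at the point you yourself flagged. Positivity of $\tau_i(a)$ for some $i$ at every nonzero point of the lattice $A\cong\Z^r$ does not make the homogeneous extension of $\bigl(\sum_i\tau_i^2\bigr)^{1/2}$ a norm on $A\otimes\R$: a seminorm can be strictly positive at every nonzero rational point yet vanish on an irrational subspace, and then its linear isometry group is noncompact and can meet $GL(r,\Z)$ in an infinite subgroup. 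This is not a technicality that more care will fix, because the lemma as you use it (``conjugation-invariant seminorm, positive on nonzero lattice points, implies finite image in $GL(r,\Z)$'') is refuted by the paper's own Example 3 in Section 7: for $G=\Z^4\rtimes_\phi\Z$ acting isometrically on $\R^2\times\R$, each nontrivial $a\in\Z^4$ translates the $\R^2$ factor by the projection $\pi(a)$ to the $\phi$-invariant plane of the unit-circle eigenvalues, so $\tau(a)=|\pi(a)|$ is a conjugation-invariant seminorm, positive on $\Z^4\setminus\{0\}$, whose kernel is the (irrational) span of the remaining eigendirections --- yet the conjugation image $\langle\phi\rangle\leq GL(4,\Z)$ is infinite and $G$ is not virtually abelian. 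Since nothing in your rigidity step distinguishes $\mathcal M_0$ from $\mathcal M$ (conjugation invariance of the $\tau_i$ and positivity on lattice points hold equally in both), while the conclusion is false in $\mathcal M$, your proof cannot be completed as written.

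The missing ingredient is precisely where the paper exploits that the action is by automorphisms of locally finite graphs. In the paper's proof, the trimmed basis $a_1,\ldots,a_m$ of a finite index subgroup $L\leq N$ has $a_i$ loxodromic on $\Gamma_i$ and elliptic on $\Gamma_j$ for $j\neq i$; writing $ha_ih^{-1}=a_1^{n_{i1}}\cdots a_m^{n_{im}}$ and comparing elliptic/loxodromic types factor by factor (possible since $h$ preserves factors and no parabolics occur) forces $n_{ij}=0$ for $j\neq i$ and then $ha_ih^{-1}=a_i^{\pm 1}$: an integrality rigidity strictly stronger than norm preservation. Your route can be repaired with the same input: by the proof of Corollary \ref{impr}, on $A$ the function $\tau_i$ is proportional to $|\theta_i|$ for an integer-valued homomorphism $\theta_i$ (an element whose $\theta_i$-value is $N$ translates the orbit $O_0$ like $g^N$, so its translation length is exactly $|N|\tau_i(g)/l$ up to the indexing constant), hence the homogeneous extension of $\tau_i|_A$ has \emph{rational} kernel $\ker(\theta_i)\otimes\R$; rationality of the kernels together with positivity on lattice points is what genuinely forces $\|\cdot\|$ to be a norm, after which compactness and discreteness finish your argument. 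Without some such $\mathcal M_0$-specific fact, the finiteness of the conjugation image --- and with it both finite generation and virtual abelianness --- remains unproven.
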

\begin{proof} 
We can drop to a finite index subgroup $H$
of $G$, so also in $\mathcal M_0$,
which is soluble, torsion free and preserves factors in its
action on $P=\Gamma_1\times\ldots\times\Gamma_n$. Now any finite index
subgroup of $H$ will also have these properties, so we can assume by
dropping down if necessary that the minimum length $d$ of a derived series
over all finite index subgroups of $H$ is equal to the derived length
of $H$; that is the smallest value of $d$ such that the $d$th derived
subgroup $H^{(d)}$ is trivial. If $d=1$ then we are done by Theorem
\ref{abl}. Otherwise if $d\geq 2$ we set $N$ equal to $H^{(d-1)}$, which is
an abelian normal subgroup of $H$. Although we do not
know that $H$ is finitely generated, Theorem \ref{abl} tells us that $N$ is
finitely generated and free abelian. Moreover, as in the proof of
Theorem \ref{abl},
we have $L$ of finite index $l$ say in $N$ with
free abelian basis $a_1,\ldots ,a_m$ (where $m\leq n$) and such that $a_i$ acts
loxodromically on (the possibly renumbered) $\Gamma_i$ but elliptically on
$\Gamma_j$ for $i\neq j$. By dropping down to a finite index subgroup of $H$
(here also called $H$),
we can assume that $L\unlhd H$. This is because $H$ acts by conjugation on
the subgroups of $N$ and for any $h\in H$ the subgroup $hLh^{-1}$ is an
index $l$ subgroup of $N$. But $N$ is finitely generated so has only finitely
many subgroups of index $l$, meaning that $L$ has a finite orbit under this
action of $H$ and thus the normaliser of $L$ has finite index in $H$.

For a given $h\in H$ with $L$ normal in this new $H$ (which still
contains $N$) and $1\leq i\leq m$, let
$ha_ih^{-1}=a_1^{n_{i1}}\ldots a_m^{n_{im}}$. As $h$ preserves factors, $a_i$
is loxodromic or elliptic in its action on $\Gamma_j$ if and only if
$ha_ih^{-1}$ is. But the latter type is determined by whether $n_{ij}$ is
non-zero (loxodromic) or zero (elliptic) from the inequalities in the
proof of Corollary \ref{unds}. Thus when $i\neq j$ we conclude that
$n_{ij}=0$. So we have $ha_ih^{-1}=a_i^k$ and $h^{-1}a_ih=a_i^l$ for
some $l,k$, which tells us that $h^{-1}ha_ih^{-1}h=a_i^{kl}$ and thus
$ha_ih^{-1}=a_i^{\pm 1}$ whenever $h\in H$. This means that we have a
subgroup $H_i$ of index (at most) 2 in $H$ and containing $a_1,\ldots ,a_m$
in which $a_i$ is a central element. So $L=\langle a_1,\ldots ,a_m\rangle$
is in the centre of $H_0:=H_1\cap \ldots \cap H_m$, which has finite index
in $H$.

Now $N$ is torsion free and contains $L$ with finite index, so is also
a copy of $\Z^m$ with $N\leq H_0$.
We now show that $N$ is also central in $H_0$. Given any $n\in N$ and
$h_0\in H_0$, we have $n^l\in L$ and $h_0n^lh_0^{-1}=n^l$.
Thus $h_0nh_0^{-1}$ is an element of $N$ whose $l$th power is $n^l$.
Now $N$ is a copy of $\Z^m$ and so the taking of $l$th roots within
$N$ is unique, giving $h_0nh_0^{-1}=n$.

We now apply Corollary \ref{impr}: for each
$1\leq i\leq m$ we have that $a_i$ is in the centre of $H_0$, so we
obtain (with group $H_0$, central element $a_i$ and graph $X=\Gamma_i$)
a homomorphism $\theta_i:H_0\rightarrow\Z$ and by putting these together
another homomorphism $\theta:H_0\rightarrow\Z^m$ with image having finite
index in $\Z^m$. As for the kernel $K$ of $\theta$, this consists of
elements which act elliptically on the renumbered graphs $\Gamma_1,
\ldots ,\Gamma_m$ (though they will act loxodromically on at least one
of the other graphs which were thrown away). In particular $L$ intersects
this kernel trivially and thus so does $N$ because it is torsion free and
$L$ has finite index in $N$.

Thus we have $\theta:H_0\rightarrow\Z^m$ with
$\theta(L)\leq\theta(N)\leq\theta(H_0)\leq\Z^m$. But as
$\theta(L)=\langle \theta(a_1),\ldots ,\theta(a_m)\rangle$
has finite index in $\Z^m$, this forces $\theta(N)$ to have finite index
in $\theta(H_0)$. So the pullback $H_1:=\theta^{-1}(\theta(N))$ has
finite index in $H_0$ and contains $N$ and $K$. Since $N\cap K$ is trivial,
this means that $\theta$ gives rise to a decomposition of $H_1$ as a semidirect
product $K\rtimes N$, which is actually a direct product $K\times N$
because $N$ is in the centre of $H_1$. But at the start we assumed that
$N$ was equal to the $(d-1)$th derived subgroup $H^{(d-1)}$ of $H$. This
means that the $(d-1)$th derived subgroup $H_1^{(d-1)}$ of $H_1$ lies
inside $N$. However the fact that $H_1=K\times N$ with $N$ abelian means
that all commutators of $H_1$, and hence all derived subgroups $H^{(i)}$
for $i\geq 1$, lie in $K$. As $d\geq 2$ we see that $H_1^{(d-1)}\leq K\cap N$
and so is trivial. Thus $H_1$ is a finite index subgroup of $H$ with derived
length less than $d$, which is a contradiction.
\end{proof}

\section{Abelianisation of centralisers}

We now turn to our third group theoretic property which covers
centralisers of infinite order elements. This will involve Theorem
\ref{main}, although here we can include groups in
$\mathcal M$ too.

To this end, let us review how this type of result proceeds in the
CAT(0) setting (for instance the proof of \cite{bh} II.6.12). Suppose
$G$ is a group acting isometrically on any CAT(0) space $X$ and $g$
is a central element of $G$ which is acting both loxodromically and
semisimply (the infimum of the displacement function is attained).
Then $g$ has an axis $A$, namely an isometrically embedded copy of
$\R$ invariant under $g$, on which $g$ acts as a non trivial
translation.
This axis need not be invariant under all
of $G$, but there is a product decomposition of $X=X_0\times A$
(which can be thought of as ``many parallel copies'' of $A$), with
each copy $\{x_0\}\times A$ invariant under $g$ and the product
decomposition invariant under all of $G$. We then have a form of
orthogonal projection, allowing us to define a homomorphism
$\theta$ from $G$ to $\R$ with $\theta(g)$ acting as it did on $A$.
In particular $\theta(g)$ has infinite order in the abelian group
$\R$, thus the image of $g$ in the abelianisation of $G$ has
infinite order.

In our case, using Theorem \ref{main}, we have:
\begin{co} \label{mapc}
Suppose that a group $G$ has a central element $g$ of infinite order. If
$G$ acts purely loxodromically
by isometries on a finite product of locally finite hyperbolic
graphs then there is a finite index subgroup $L$ of $G$ and $n>0$ such that
the element $g^n$ is in $L$ and has infinite order in the abelianisation
$L/L'$.
\end{co}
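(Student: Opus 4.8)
The plan is to reduce the statement to Theorem \ref{main} by first passing to the factor-preserving finite index subgroup supplied by Corollary \ref{ismprd}, with the one genuinely separate case being the Euclidean factor, which I would treat by the CAT(0) picture recalled at the start of this section. To begin, since $g$ has infinite order and the action is purely loxodromic, $g$ acts loxodromically on $P=\Gamma_1\times\cdots\times\Gamma_k$. By Corollary \ref{ismprd} there is a finite index $H\le G$ preserving the factors of a decomposition $\Gamma_1\times\cdots\times\Gamma_j\times\R^{d}$, acting by graph automorphisms on each $\Gamma_i$ and by Euclidean isometries on $\R^d$. As $H$ has finite index some power $g^m$ lies in $H$; since $g$ is central in $G$ so is $g^m$, hence $g^m$ is central in $H$ and still loxodromic, and by Proposition \ref{type} it acts loxodromically on at least one factor.

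\textbf{Case 1:} $g^m$ is loxodromic on a graph factor $\Gamma_i$. Here I would apply Theorem \ref{main} directly to the action of $H$ by automorphisms on the locally finite hyperbolic graph $X=\Gamma_i$, with central loxodromic element $g^m$. This yields a finite index normal subgroup $L\unlhd H$ containing $g^m$ together with a homomorphism $\theta\colon L\to\Z$ satisfying $\theta(g^m)=1$.

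\textbf{Case 2:} $g^m$ is loxodromic only on the Euclidean factor $\R^d$. Every isometry of $\R^d$ is semisimple, so this is exactly the CAT(0) situation recalled above: $g^m$ has an axis, and being central it forces an $H$-invariant splitting $\mathrm{Min}(g^m)=Y\times\R$ on which $g^m$ acts as a nontrivial translation of the $\R$ factor (as in \cite{bh} II.6.12). Projecting to $Isom(\R)$ and passing to the index at most $2$ subgroup $L$ that preserves orientation of the $\R$ factor, which contains $g^m$ since a translation is orientation preserving, I obtain a homomorphism $\theta\colon L\to\R$ with $\theta(g^m)\neq 0$.

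In both cases $\theta$ maps $L$ to an abelian group and sends $g^m$ to an element of infinite order, so $\theta$ factors through the abelianisation $L/L'$ and the image of $g^m$ cannot have finite order there. Taking $n=m$, and noting that $L$ has finite index in $G$ because it is finite index in $H$, which is finite index in $G$, completes the argument. The main obstacle is the Euclidean factor of Case 2: Theorem \ref{main} is stated only for genuine locally finite hyperbolic graphs under an action by automorphisms, whereas Corollary \ref{ismprd} groups the simplicial line factors into $\R^d$ with an action merely by isometries, so this factor must be handled by the CAT(0) argument rather than by Theorem \ref{main}. One must also keep track of the fact that only a power $g^m$ is guaranteed to lie in the finite index subgroup, which is precisely why the conclusion asserts infinite order of $g^n$ for some $n>0$ rather than of $g$ itself.
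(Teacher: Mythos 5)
Your proposal is correct and follows essentially the same route as the paper: pass to the factor-preserving finite index subgroup from Corollary \ref{ismprd}, note via Proposition \ref{type}(iii) that a power of $g$ is loxodromic on some factor, apply Theorem \ref{main} in the graph-factor case and the recalled CAT(0)/Bridson--Haefliger argument (homomorphism to $\R$) in the Euclidean-factor case, then factor through the abelianisation. The only cosmetic difference is your passage to the orientation-preserving index at most $2$ subgroup in Case 2, which is harmless but unnecessary, since centrality of $g^m$ already forces every element of $H$ to translate (rather than flip) the $\R$ factor of $\mathrm{Min}(g^m)$.
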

\begin{proof} First drop to a finite index subgroup $H$ of $G$ as in
Corollary \ref{ismprd} where the action of $H$
on $\Gamma_1\times\ldots\times\Gamma_k\times\R^m$ say
preserves factors and set $h:=g^n\in H$. By Proposition 4.4 (iii)
$h$ acts as a loxodromic element on one of these factors. If it is
$\Gamma_i$ then
by Theorem \ref{main} we obtain our finite index subgroup $L$ of
$H$ containing $g$ and our homomorphism $\theta$ onto $\Z$ in which
$\theta(g)$ has infinite order. Now as $\Z$ is abelian, it is the case that
$\theta$ will factor through the abelianisation map
$\alpha:L\rightarrow L/L'$, so that $\alpha(g)$ also has
infinite order in the abelianisation of $L$.

If however it is the $\R^m$ factor where $g$ acts loxodromically then
$g$ will also act semisimply so,
by the above discussion for CAT(0) groups where $X=\R^m$, the same
argument works except with a homomorphism to $\R$ rather than to $\Z$.
\end{proof}

This argument allows us to show that some important groups do not lie
in $\mathcal M$, let alone ${\mathcal M}_0$. 
\begin{co} \label{mcg} Let $MCG$ be the mapping
class group of either the closed orientable surface of genus $g\geq 3$,
or of genus at least 2 with at least one boundary component or at least
two punctures. For any
action of $MCG$ on a finite product of locally finite hyperbolic graphs,
no Dehn twist can act as a loxodromic element.
\end{co}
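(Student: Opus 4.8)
The plan is to show that a Dehn twist $\gamma$ in such a mapping class group
acting loxodromically would violate the abelianisation constraint coming from
Corollary \ref{mapc}. The key external fact I would invoke is from
\cite{bri}: the Dehn twist $\gamma$ has finite order in the abelianisation of
its centraliser $C(\gamma)$ in $MCG$. Since $\gamma$ is by definition central
in $C(\gamma)$ and has infinite order as a group element, the strategy is to
restrict the given action of $MCG$ to the subgroup $G:=C(\gamma)$ and derive a
contradiction.

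First I would observe that the class $\mathcal M$ (and hence the property of
possessing a purely loxodromic action by isometries on a finite product of
locally finite hyperbolic graphs) is closed under taking subgroups, so if
$MCG$ acts this way then so does $C(\gamma)$. Thus $G=C(\gamma)$ satisfies the
hypotheses of Corollary \ref{mapc} with central infinite-order element
$g=\gamma$. Applying that corollary directly gives a finite index subgroup $L$
of $G$ and an $n>0$ with $\gamma^n\in L$ and $\gamma^n$ of infinite order in
the abelianisation $L/L'$.

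The remaining step is to reconcile this ``infinite order in a finite index
subgroup'' conclusion with the known fact that $\gamma$ has finite order in
the abelianisation of $G$ itself. This is exactly the point where
Corollary \ref{infa} (the purely group-theoretic statement mentioned in the
introduction) is designed to apply: taking $h=\gamma^n$, which is central in
$G$ and has infinite order in the abelianisation of the finite index subgroup
$L$, Corollary \ref{infa} forces $\gamma^n$ to have infinite order in the
abelianisation of $G$. But this contradicts \cite{bri}, which tells us that
$\gamma$, and hence every power $\gamma^n$, has finite order in the
abelianisation of $C(\gamma)=G$.

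The main obstacle I anticipate is not the homological input from \cite{bri}
but correctly packaging the passage from $L$ back up to $G$: Corollary
\ref{mapc} only controls a power of $\gamma$ in a finite index subgroup, so
without Corollary \ref{infa} one would be stuck with a statement about $L$
rather than $G$, and it is precisely to avoid ``dropping repeatedly to finite
index subgroups'' that the group-theoretic lemma \ref{infa} is isolated. One
should also take care that the genus and boundary/puncture hypotheses are
exactly those under which the cited result of \cite{bri} guarantees finiteness
of the image of the Dehn twist in $H_1$ of the centraliser; assuming that
input, the loxodromic assumption on $\gamma$ is untenable, which is the desired
conclusion.
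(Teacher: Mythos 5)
Your proof is correct, but it takes a genuinely different route from the paper's own proof of this corollary. The paper never works with the full centraliser $C(\gamma)$ and never invokes Corollary \ref{infa}: instead, following the argument of \cite{bh} II.7.26, it locates a subgroup $G$ of $MCG$ in which $\gamma$ is central and which is isomorphic to the fundamental group of the unit tangent bundle of $\Sigma_2$, a central extension of $\Z=\langle\gamma\rangle$ by $\pi_1(\Sigma_2)$ with $\widetilde{PSL(2,\R)}$ geometry. Since every finite index subgroup $L$ of this $G$ is again such a $3$-manifold group, with its central $\Z$ (which must contain any central $\gamma^n$) dying in $L/L'$, the conclusion of Corollary \ref{mapc} fails for \emph{every} finite index subgroup, giving the contradiction outright; no transfer lemma is needed, which is why \ref{mcg} can appear in the paper before Theorem \ref{cab} and Corollary \ref{infa} are even stated. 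Your route --- restrict to $C(\gamma)$, obtain $\gamma^n$ of infinite order in $L/L'$ from \ref{mapc}, promote this to the abelianisation of $C(\gamma)$ via \ref{infa}, and contradict the homological fact from \cite{bri} --- is exactly the argument the paper deploys later for Corollary \ref{aut} (for $Aut(F_n)$ and $Out(F_n)$, with \cite{rwad} in place of \cite{bri}), and the paper remarks just before that corollary that this argument works for mapping class groups too; so there is no circularity, merely a different choice of tools. What the paper's version buys is explicit control of \emph{all} finite index subgroups via $3$-manifold geometry, with no need to know $H_1$ of the full centraliser; what yours buys is uniformity with the $Aut(F_n)$/$Out(F_n)$ case. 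Two caveats, both shared with the paper's own write-up rather than defects specific to your argument: Corollary \ref{mapc} as stated assumes a purely loxodromic action, whereas here only $\gamma$ is assumed loxodromic, so strictly one should note that the proof of \ref{mapc} uses only loxodromicity of the central element; and the finiteness of the image of $\gamma$ in $H_1$ of its centraliser must indeed be available from \cite{bri} for all the listed surfaces, which is precisely what the paper asserts in its introduction.
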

\begin{proof}
Given a Dehn twist $\gamma\in MCG$, we apply Corollary \ref{mapc} and
utilise the argument in \cite{bh} II.7.26. This shows that there is
a subgroup $G$ of $MCG$ in which $\gamma$ is central and where $G$
is isomorphic to the fundamental group of the unit tangent bundle
of $\Sigma_2$, where $\Sigma_g$ is the closed orientable surface of genus
$g$. Thus $G$ is
a central extension of $\Z=\langle\gamma\rangle$ by $\pi_1(\Sigma_2)$
and $\gamma$ is trivial in the abelianisation of $G$. This is the
fundamental group of a closed orientable 3-manifold with
$\widetilde{PSL(2,\R)}$ geometry, so any finite index subgroup $L$ of $G$
will be too. On taking any positive power $\gamma^n$ such that
$\gamma^n\in L$, we know that $L$ is a central extension of $\Z$
by $\pi_1(\Sigma_g)$ with this central $\Z$ being trivial in the
abelianisation of $L$. But $\gamma^n$ must be in the centre of $L$
so lies in this $\Z$, thus Corollary \ref{mapc} now applies.
\end{proof}

This result is certainly false if local finiteness is removed. Indeed
\cite{befu} shows that for any Dehn twist there exists an isometric
action of the mapping class group on a hyperbolic graph on which this
Dehn twist acts loxodromically.

We wish to extend this result to the automorphism and outer automorphism
groups of a free group. Here we do have the equivalent statement in \cite{rwad}
that a Dehn twist has finite order in the abelianisation of its centraliser
but we do not have an equivalent version once we start moving to finite index
subgroups. However we can avoid that problem completely by using the
following abstract group theoretic result. This came about by failing
to find an example illustrating the above where we have a finite index
subgroup
$H$ of $G$ and an element $h\in H$ which is central in $G$, such that $h$
has finite order in the abelianisation of $G$ but infinite order in the
abelianisation of $H$. Instead our result is that there are no examples.
First we put this statement into context by considering
the following cases:\\
\hfill\\
{\bf Example}: (i) Say we have a central element $h$ of $G$ and a finite
index subgroup $H$ in $G$, along with a homomorphism
$\theta:H\rightarrow\Z$ where $\theta(h)=1$, so certainly $h$ has infinite
order in the abelianisation of $H$. We might also have $k\in H$ with
$\theta(k)=0$ and an element $g\in G\setminus H$ with $gkg^{-1}=hk^{-1}$.
Then $\theta$ cannot be extended to $G$, even on rescaling, because it would
imply that
$\theta(h)=0$. However $G$ might possess a different homomorphism $\phi$ onto
$\Z$ where $\phi(k)=1$ and $\phi(h)=2$ (for instance if
$H=\Z^2=\langle h,k\rangle$ has index 2 in $G$ and $g$ has order 2).\\
(ii) Now suppose we are in the same setup as above but this time
$gkg^{-1}=hk$. Then $h$ cannot have infinite order in the abelianisation
of $G$ as it is a commutator in $G$. But we have $g^nkg^{-n}=h^nk$ and
if $H$ has finite index in $G$ then $g^n\in H$ for some $n>0$. Hence
$h^n$ is a commutator in $H$ too, thus $h$ did not have infinite order
in the abelianisation of $H$ to start with.

We can now present our general result.
\begin{thm} \label{cab}
Suppose that a group $G$ has a finite index subgroup $H$ and an infinite
order element $h\in H$ which is central in $G$. If there exists a
homomorphism $\theta:H\rightarrow\Z$ with $\theta(h)\neq 0$ then there
exists a homomorphism $\phi:G\rightarrow \Z$ with $\phi(h)\neq 0$.
\end{thm}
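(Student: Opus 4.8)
The plan is to leverage the finite index of $H$ in $G$ to average or induce the given homomorphism $\theta$ up to all of $G$, using the fact that $h$ is central and has infinite order.

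First I would pass to a normal subgroup of finite index contained in $H$, so without loss of generality I can assume $H \unlhd G$; this only shrinks $\theta$'s domain, and since $h$ is central it certainly remains in this smaller subgroup with $\theta(h)$ still nonzero (after possibly restricting $\theta$). Next, the key idea is to build a $G$-invariant homomorphism from $H$ to $\Z$ (or to $\Q$) by summing the $G$-conjugates of $\theta$. Specifically, I would consider the map $\Theta : H \to \Z$ defined by $\Theta(x) = \sum_{gH \in G/H} \theta(g x g^{-1})$, where the sum runs over a transversal for $H$ in $G$. Since $H$ is normal, each $g x g^{-1}$ lies in $H$, so this is well defined, and because conjugation by $H$ permutes the inner terms trivially, $\Theta$ is a genuine homomorphism that is invariant under the $G$-conjugation action on $H$. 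The crucial point is that $\Theta(h) = [G:H]\,\theta(h) \neq 0$, because $h$ is central so every conjugate $ghg^{-1}$ equals $h$ and contributes $\theta(h)$.

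The main obstacle, and the step I expect to require the most care, is upgrading this $G$-invariant homomorphism $\Theta$ on $H$ to an actual homomorphism $\phi$ defined on all of $G$. Invariance under conjugation tells us $\Theta$ factors through the coinvariants, but extending across the finite quotient $G/H$ is not automatic because $G$ could be a nonsplit extension. The clean way around this is to work with $\Q$-coefficients: the abelianization $G^{ab} \otimes \Q$ receives the class of $h$, and I would show that the restriction map $H^{ab} \otimes \Q \to G^{ab} \otimes \Q$ behaves well under the transfer (Verlagerung) homomorphism. Concretely, the transfer map $\mathrm{tr} : G^{ab} \to H^{ab}$ composed with $\Theta$, or rather the interplay between transfer and restriction, sends the image of $h$ in $G^{ab}$ to $[G:H]$ times its image in $H^{ab}$ under the natural maps; since $\Theta(h) \neq 0$ witnesses that $h$ has infinite order in $H^{ab} \otimes \Q$, and centrality forces the transfer of $h$ to be $[G:H]\,h$, the element $h$ must also have infinite order in $G^{ab} \otimes \Q$.

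Once $h$ is known to have infinite order in $G^{ab}\otimes\Q$, I would finish by producing the integral homomorphism $\phi$. The image of $h$ generates an infinite cyclic direct summand of $G^{ab}/(\text{torsion})$ after tensoring appropriately, so there is a homomorphism from $G^{ab}$ to $\Z$ that is nonzero on $h$; composing with the abelianization map $G \to G^{ab}$ yields the desired $\phi : G \to \Z$ with $\phi(h) \neq 0$. I expect the transfer argument to be the genuinely delicate part, since one must verify that centrality of $h$ makes its transfer equal to $[G:H]\,h$ rather than some twisted sum, but this is exactly where the hypothesis that $h$ is central (and not merely in $H$) does the essential work.
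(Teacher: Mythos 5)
Your approach via the transfer homomorphism is genuinely different from the paper's proof, and its core is sound. The paper instead works concretely: it lets $G$ act by left multiplication on the left cosets of $K=\mathrm{Ker}(\theta)$, observes that these cosets form finitely many copies of $\Z$ and that centrality of $h$ forces every element of $G$ to act as a permutation of the copies combined with a translation on each copy, and then defines $\phi$ by summing the translation amounts (a map $\Z^l\rtimes \mathrm{Sym}(l)\to\Z$ restricted to the image of $G$). That is in effect a hands-on construction of the very transfer map you invoke, so the two arguments are close relatives; yours is slicker where the paper's is self-contained. Your intermediate steps are correct: $h$ lies in the normal core of $H$ because it is central; the averaged map $\Theta$ is a well-defined $G$-invariant homomorphism; and, crucially, centrality does force $\mathrm{tr}(h)=[G:H]\,h$ in $H^{\mathrm{ab}}$ (with a transversal $g_1,\dots,g_l$ one has $hg_i=g_ih$, so the transfer permutation is trivial and every fragment equals $h$).

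However, your final paragraph contains a genuine gap. From ``$h$ has infinite order in $G^{\mathrm{ab}}\otimes\Q$'' you conclude that the image of $h$ generates an infinite cyclic direct summand and hence that some homomorphism $G\to\Z$ is nonzero on $h$. The theorem assumes no finite generation of $G$, and for infinitely generated groups this implication is false: in $\Q$ every nonzero element has infinite order, yet $\mathrm{Hom}(\Q,\Z)=0$ and no element generates a direct summand. The paper flags precisely this distinction in the remark between Theorem \ref{cab} and Corollary \ref{infa}; keeping the conclusion as an integral homomorphism rather than a statement about the abelianisation is the whole point of phrasing the theorem this way. Fortunately the repair costs nothing and uses only what you already built: since $\Z$ is abelian, $\theta$ factors through $H^{\mathrm{ab}}$, so you may simply set $\phi:=\theta\circ\mathrm{tr}\colon G\to\Z$. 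This is a homomorphism, and your centrality computation gives $\phi(h)=[G:H]\,\theta(h)\neq 0$, which is exactly the desired conclusion. With this finish, the detour through $G^{\mathrm{ab}}\otimes\Q$ (and indeed the averaged map $\Theta$ and the reduction to $H$ normal) can be deleted entirely.
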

\begin{proof}
Let $G=g_1H\cup\ldots\cup g_lH$
and $K=Ker(\theta)$ which of course need not be normal in $G$. However
we can still consider the action by left multiplication of $G$ on the
left cosets of $K$ in $G$. We first note that each coset is uniquely
defined by $g_ih^jK=h^jg_iK$ for some $1\leq i\leq l$ and $j\in\Z$. Thus
on regarding these cosets as points we can think of them as forming $l$
disjoint copies of $\Z$, equipped with an action of $\langle h\rangle$
that preserves each of these copies setwise and shifts the elements up
by 1 in each copy. But $h$ being central in $G$ means that all elements
of $G$ preserve setwise this decomposition into copies of $\Z$, because if
$g(g_iK)=g_mh^nK$ for the appropriate values of $m$ and $n$ then
$g(g_ih^jK)=h^jg_mh^nK=g_mh^{n+j}K$. This also shows that we can describe
the action of $g$ as some permutation of these copies of $\Z$ along with
a translation on each copy.

Let us consider all possible bijections on our space of left cosets of $K$
in $G$ which act in this way. They form a group $P$
which can be thought of
as a copy of the semidirect product $\Z^l\rtimes Sym(l)$. Here an
element $(r_1,\ldots ,r_l)\in\Z^l$ acts by preserving setwise the copies
of $\Z$
and translating the first by $r_1$, the second by $r_2$ and so on. Note
that our central element $h$ corresponds to $(1,\ldots ,1)$. Meanwhile
$Sym(l)$ acts by permuting the copies of $\Z$ without any translating,
so that $\pi\in Sym(l)$ would send the point $g_ih^jK$ to
$g_{\pi(i)}h^jK$. The semidirect product structure can be described by
defining $e_i$ to be the translation by 1 on the $i$th copy of $\Z$ and
fixing pointwise all the other copies. As this generates the normal
subgroup $\Z^l$ of the semidirect product, we have
$\pi e_i\pi^{-1}=e_{\pi(i)}$ for $\pi\in Sym(l)$ and we can extend over
linear combinations of the $e_i$s.

First assume that $G$ is actually equal to $P$. As our description of the
semidirect product structure gives rise to a finite presentation of $P$,
we see that there is a surjective homomorphism $\phi$ say from $P$ to $\Z$
given by sending each $e_i$ to 1 (and obviously all $\pi$ to 0). Note that
$\phi(r_1,\ldots ,r_l)=r_1+\ldots +r_l$ and in particular we have
$\phi(h)=l\neq 0$ so we are done in this case. But in general our action
of $G$ provides us with a homomorphism from $G$ to a subgroup of $P$, so
we just restrict $\phi$ to that subgroup whereupon the image of $h$ still
maps to $l>0$.
\end{proof}

If $G$ is finitely generated then so is its abelianisation,
thus saying that there is a homomorphism $\phi$ from $G$ to $\Z$ with
$\phi(h)\neq 0$ is the same as saying that $h$ has infinite order in
the abelianisation of $G$. This need not be true if $G$ is not finitely
generated, so for completeness we provide a result that covers this case too.
\begin{co} \label{infa}
Suppose that a group $G$ has a finite index subgroup $H$ and an infinite
order element $h\in H$ which is central in $G$. If $h$ has infinite order
in the abelianisation of $H$ then $h$ has infinite order in the abelianisation
of $G$.
\end{co}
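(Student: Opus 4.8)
The plan is to deduce this from Theorem \ref{cab} by proving the contrapositive and reducing to the finitely generated situation. The key point to keep in mind is that the hypothesis ``$h$ has infinite order in the abelianisation'' is genuinely weaker than ``there is a homomorphism to $\Z$ which is nonzero on $h$'': for instance $h$ could generate a copy of $\Q$ inside the abelianisation, which admits no nontrivial map to $\Z$. This is exactly why Theorem \ref{cab} cannot be applied to the pair $(G,H)$ directly, and it is the obstacle the argument must get around. The remedy is that for a \emph{finitely generated} group the two notions coincide, since a finitely generated abelian group has an element of infinite order precisely when some homomorphism to $\Z$ detects it.

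So I would argue contrapositively: assume $h$ has finite order in the abelianisation of $G$, meaning $h^k\in G'$ for some $k>0$, and aim to show $h$ has finite order in the abelianisation of $H$. Since $G'$ is generated by commutators, $h^k$ is a \emph{finite} product of commutators $[a_j,b_j]$ of elements $a_j,b_j\in G$. I would then let $G_0$ be the subgroup generated by $h$ together with these finitely many elements $a_j,b_j$, so that $G_0$ is finitely generated, contains $h$ as a central element (central in $G$, hence in $G_0$) of infinite order, and already satisfies $h^k\in G_0'$. Setting $H_0:=H\cap G_0$, I get a finite index subgroup of $G_0$ (its index is at most $[G:H]$) which contains $h$ and is itself finitely generated, being a finite index subgroup of a finitely generated group.

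Now the pair $(G_0,H_0)$ meets the hypotheses of Theorem \ref{cab}, and I would apply that theorem in contrapositive form. Because $G_0$ is finitely generated and $h$ has finite order in $G_0/G_0'$ (as $h^k\in G_0'$), every homomorphism $G_0\to\Z$ sends $h$ to $0$; by the contrapositive of Theorem \ref{cab} the same holds for every homomorphism $H_0\to\Z$. Since $H_0$ is also finitely generated, this forces $h$ to have finite order in $H_0/H_0'$, i.e.\ $h^m\in H_0'$ for some $m>0$. Finally $H_0'\subseteq H'$ because $H_0\le H$, so $h^m\in H'$ and $h$ has finite order in the abelianisation of $H$, completing the contrapositive. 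The main work, and the only subtle step, is the reduction to $G_0$ and $H_0$ in the middle paragraph: it is what converts the abelianisation statement into the homomorphism-to-$\Z$ statement that Theorem \ref{cab} actually speaks about, while the rest is a routine pullback of commutator relations.
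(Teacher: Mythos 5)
Your proof is correct and is essentially the paper's own argument: both reduce to the finitely generated case by taking $G_0$ generated by $h$ together with the entries of a commutator expression for a power of $h$, intersect with $H$ to get a finitely generated finite index subgroup, and then apply Theorem \ref{cab}, using finite generation to pass between ``infinite order in the abelianisation'' and ``detected by a homomorphism to $\Z$''. The only difference is organisational -- you run a clean contrapositive while the paper phrases the same steps as a proof by contradiction -- so the logical content is identical.
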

\begin{proof} If there is some counterexample then we reduce it to
the finitely generated case. We first note that for any subgroup of a group
$G$, if $s\in S$ is an element having finite order in the abelianisation of
$S$ then it does so in the abelianisation of $G$, because some positive
power $s^n$ will be a product of commutators in $S$ and so also in $G$.

Now suppose as in the claimed counterexample
that there is $n>0$ with $h^n$ a product
$\Pi\,[x_i,y_i]$ of commutators in $G$. Let $G_0=\langle h,x_i,y_i\rangle$
be the finitely generated subgroup of $G$ given by $h$ and the elements of
$G$ making up these commutators. Then $h$ also has finite order in the
abelianisation of $G_0$. As $H$ has finite index in $G$, we know
$S:=G_0\cap H$ has finite index in $G_0$ and also $h$ is in $S$. But we
assumed $h$ has infinite order in the abelianisation of $H$ and thus it
has infinite order in the abelianisation of $S$ by the first comment.
Also $h$ is of course central in $G_0$ because it was in $G$. Thus we can
apply Theorem \ref{cab} to $G_0$ and $S$ for a contradiction.
\end{proof}

We can now provide the promised improvement to Theorem \ref{main}.
\begin{co} \label{impr}
Suppose we have a group $H$ acting by automorphisms on the locally
finite hyperbolic graph $X$ and a loxodromic element $g\in H$ which is
central in $H$. Then no elements act parabolically and
there is a homomorphism $\phi$ from $H$ to
$\Z$ such that $\phi(g)\neq 0$ and with $Ker(\phi)$ consisting
precisely of the elliptic elements in $H$ under this action.
\end{co}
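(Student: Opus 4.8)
The plan is to bootstrap the homomorphism $\theta$ that Theorem \ref{main} produces on a finite index subgroup up to one defined on all of $H$, using Theorem \ref{cab} as the engine. First I would apply Theorem \ref{main} to the central loxodromic element $g$: this already records that no element of $H$ is parabolic, and it supplies a normal finite index subgroup $L\lhd H$ with $g\in L$, a homomorphism $\theta:L\to\Z$ with $\theta(g)=1$ whose kernel is exactly the elliptic elements of $L$, and the pseudoaxis decomposition $P_g=O_0\sqcup\cdots\sqcup O_{k-1}$. Recall that $L$ is the kernel of the action of $H$ permuting these orbits, that each $O_i$ is a faithful copy of $\Z$ on which the central element $g$ acts as the shift, and that $\theta$ measures the translation of an element of $L$ along $O_0$. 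Thus the action of $H$ on $P_g$ is precisely an action on finitely many copies of $\Z$ with $g$ central shifting each by one --- exactly the situation analysed in the proof of Theorem \ref{cab}. Applying that construction yields a homomorphism $\phi:H\to\Z$, given by $\phi(x)=\sum_{i}t_i(x)$ where $t_i(x)$ is the amount by which $x$ translates the $i$-th copy, and with $\phi(g)=k\neq0$. It then remains only to identify $\mathrm{Ker}(\phi)$ with the elliptic elements of $H$.

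The crucial step is to see how an element $\eta\in L$ translates the \emph{different} orbits. Since $\eta$ fixes each $O_i$ setwise and commutes with $g$, it acts on $O_i\cong\Z$ as translation by some integer $c_i$, with $c_0=\theta(\eta)$. I claim all the $c_i$ coincide. Indeed, if $c_j\neq c_0$ for some $j$, then $\eta g^{-c_0}$ fixes the vertex $v_0\in O_0$ and is therefore elliptic, so all of its orbits are bounded; yet it translates $O_j$ by $c_j-c_0\neq0$, producing an unbounded orbit, a contradiction. Hence $\eta$ translates every orbit by the common amount $\theta(\eta)$, and feeding this into the formula for $\phi$ gives $\phi(\eta)=k\,\theta(\eta)$ for all $\eta\in L$. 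Consequently $\mathrm{Ker}(\phi)\cap L=\mathrm{Ker}(\theta)$ is exactly the set of elliptic elements of $L$.

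Finally I would extend this from $L$ to $H$ by a power argument. Given $x\in H$, pick $N>0$ with $x^N\in L$. As noted in Section 2, $x$ is elliptic if and only if $x^N$ is; and since $x^N\in L$, the latter is equivalent to $\theta(x^N)=0$, hence to $\phi(x^N)=0$ by the previous paragraph. As $\phi(x^N)=N\phi(x)$ with $N\neq0$, this is equivalent to $\phi(x)=0$. Therefore $x$ is elliptic precisely when $\phi(x)=0$, so $\mathrm{Ker}(\phi)$ is exactly the set of elliptic elements of $H$, as required.

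The step I expect to be the real obstacle is pinning down the kernel, and more precisely the claim that all the translation lengths $c_i$ of an element of $L$ agree. Theorem \ref{cab} on its own only delivers \emph{some} homomorphism $\phi$ with $\phi(g)\neq0$, and the kernel of such a map is not forced by that single condition --- one could always add a homomorphism vanishing on $g$. What rescues the identification is the geometric input that, inside $L$, an element cannot translate one orbit and fix another, which is what ties $\phi|_L$ back to $\theta$; the passage to all of $H$ is then purely formal.
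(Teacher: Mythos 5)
Your proof is correct and follows essentially the same route as the paper's: both combine Theorem \ref{main} with the permutation-plus-translation construction from the proof of Theorem \ref{cab}, applied to the finitely many copies of $\Z$ making up the pseudoaxis, and then identify $Ker(\phi)$ by showing that an element preserving the copies must translate each of them by the same amount before extending to all of $H$ via proper powers. Your argument for the equal-translation step (the difference element fixes a vertex, hence is elliptic, yet would translate another copy nontrivially, giving an unbounded orbit) is a clean equivalent of the paper's triangle-inequality computation showing $r_1=\ldots=r_l$.
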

\begin{proof}
We can combine the approaches of Theorem \ref{main} (whose notation we use
here) and Theorem \ref{cab} by noting that the proof of Theorem \ref{main}
is unchanged if we restrict the pseudoaxis $P_g$ (which is $H$ invariant)
to the orbit under $H$ of $v_0$. This smaller set will be
a subcollection of those copies of $\Z$, including $O_0$. Moreover the proof
also goes through if we take $L$ to be the setwise stabiliser of $O_0$
instead of stabilising all copies of $\Z$, which we do so now, except that
$L$ need not be normal in $H$. However we still obtain a homomorphism
$\theta$ from $L$ to $\Z$ with $\theta(g)\neq 0$ and with kernel $K$.
Now the stabiliser of $v_0$ in $H$ is exactly $K$, so the proof of
Theorem \ref{cab} above can be visualised by replacing the left cosets
of $K$ in $H$ by the vertices in the orbit of $v_0$ under $H$. This gives
us our new homomorphism $\phi$ from $H$ to $\Z$ with $\phi(g)\neq 0$.

Now an element of $H$ will be in the kernel of $\phi$ and/or will be
elliptic/loxodromic if and only if the same is true for any proper power.
Thus we can assume that any element $h$ in $H$ is of the form
$(r_1,...,r_l)$ as in Theorem \ref{cab}. If $(r_1,\ldots ,r_l)$
is not the zero vector then this element acts with some point
having an unbounded orbit, thus
elements outside $Ker(\phi)$ are certainly loxodromic. Similarly if
any of $r_1,\ldots ,r_l$ is zero then this element fixes a vertex.
But as $H$ acts by isometries on $X$,
it is the case that $r_1=\ldots
=r_l$ anyway. To see this, if say $r_1\neq r_2$ then we have vertices
$v,w$ of $X$ such that $h^n(v)=g^{nr_1}(v)$ and
$h^n(w)=g^{nr_2}(w)$. But $2d(v,w)=d(h^n(v),h^n(w))+d(w,v)
=d(g^{nr_1}(v),g^{nr_2}(w))+d(g^{nr_2}(w),g^{nr_2}(v))
\geq d(g^{nr_1}(v),g^{nr_2}(v))$
so $d(g^{n(r_1-r_2)}(v),v)$ is bounded as $n$ varies. But $g$ is
not elliptic so we must have $r_1=r_2$.
\end{proof}
Note that the homomorphism $\theta:L\rightarrow\Z$ from Theorem \ref{main} is
actually the same as the restriction of $\phi$ in Corollary \ref{impr},
up to scaling. This is equivalent to asserting that
$Ker(\theta)=Ker(\phi)\cap L$, which is true
because $Ker(\phi)$ consists of all the elliptic elements and $Ker(\theta)$
was shown to consist of all the elliptic elements in $L$.

We can now apply these results to some important examples.
We have already seen the equivalent for the mapping class group in
Corollary \ref{mcg} and the proof below also works here by
considering centralisers of Dehn twists without having to deal
directly with finite index subgroups that may occur if our given action
does not preserve factors.
\begin{co} \label{aut}
Let $G$ be the automorphism group $Aut(F_n)$ or the outer automorphism
group $Out(F_n)$ of the free group $F_n$, where $n\geq 4$. Then in any
isometric action of $G$ on any finite product $P$ of locally finite
hyperbolic graphs, no Nielsen automorphism can act loxodromically.
\end{co}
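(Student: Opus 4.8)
The plan is to argue by contradiction, paralleling the proof of Corollary \ref{mcg} but replacing the appeal to Corollary \ref{mapc} with the sharper Corollaries \ref{impr} and \ref{infa}, so as to bypass the finite index difficulties noted above. Suppose then that $\gamma$ is a Nielsen automorphism which acts loxodromically under some isometric action of $G$ on the finite product $P$. I would take $C=C_G(\gamma)$ to be its centraliser, so that $\gamma$ has infinite order and is central in $C$, and under the restricted action of $C$ on $P$ the element $\gamma$ still acts loxodromically. The one external input is the result of \cite{rwad}, available precisely because $n\geq 4$, telling us that $\gamma$ has finite order in the abelianisation of its centraliser $C$; equivalently every power $\gamma^n$ has finite order there. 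The whole aim is to contradict this.

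First I would pass to a finite index subgroup $H$ of $C$ which preserves factors, using Corollary \ref{ismprd}; this may introduce a de Rham factor $\R^m$, so that $H$ acts on $\Gamma_1\times\ldots\times\Gamma_j\times\R^m$ by graph automorphisms on each $\Gamma_i$ and by Euclidean isometries on $\R^m$. As $\gamma$ acts loxodromically on $P$, so does some power $h:=\gamma^n\in H$, and this $h$ is still central in $H$. By Proposition \ref{type}(iii) the element $h$ must act loxodromically on at least one factor. If this factor is a graph $\Gamma_i$ then, applying Corollary \ref{impr} to $H$ acting by automorphisms on $\Gamma_i$ with central loxodromic element $h$, I obtain a homomorphism $\phi:H\rightarrow\Z$ with $\phi(h)\neq 0$; crucially this yields the homomorphism directly on $H$, without the further drop to a finite index subgroup that Theorem \ref{main} alone would force. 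If instead the loxodromic factor is the Euclidean $\R^m$, then $h$ acts loxodromically and hence semisimply there, and the CAT(0) argument recalled before Corollary \ref{mapc} (with $X=\R^m$) produces a homomorphism $H\rightarrow\R$ sending $h$ to an element of infinite order. In either case $\phi$ factors through the abelianisation map, so $h=\gamma^n$ has infinite order in the abelianisation of $H$.

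Finally I would invoke Corollary \ref{infa} with the group $C$, its finite index subgroup $H$, and the element $h=\gamma^n$, which is central in $C$ and of infinite order: since $\gamma^n$ has infinite order in the abelianisation of $H$, it has infinite order in the abelianisation of $C$. This directly contradicts \cite{rwad}, completing the proof that no Nielsen automorphism can act loxodromically, and the argument is identical for $Aut(F_n)$ and $Out(F_n)$ save for which instance of \cite{rwad} is cited. I expect the main obstacle to be one of careful bookkeeping rather than of substance: one must ensure that \cite{rwad} is applied to the full centraliser $C$ and not to some ad hoc finite index subgroup, and must correctly treat the Euclidean factor via the semisimple CAT(0) case, since a Nielsen automorphism might a priori act loxodromically only on the de Rham part $\R^m$ that appears when the action fails to preserve factors.
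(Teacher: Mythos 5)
Your proposal is correct and follows essentially the same route as the paper: centraliser $C=C_G(\gamma)$, a homomorphism to $\Z$ (or $\R$) on a factor-preserving finite index subgroup detecting a power of $\gamma$, then Corollary \ref{infa} to promote infinite order in the abelianisation up to $C$ itself, contradicting \cite{rwad}. The only difference is cosmetic: the paper cites Corollary \ref{mapc} as a black box (which internally drops to a further finite index subgroup via Theorem \ref{main}), whereas you inline that argument using Corollary \ref{impr}, saving one finite-index descent; since Corollary \ref{infa} absorbs any such descent anyway, the two versions are equivalent.
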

\begin{proof}
Suppose $G$ has such an action on some $P$ and take any
Nielsen automorphism $\alpha\in G$. Then the centraliser $C:=C_G(\alpha)$
also has such an action, so we apply Corollary \ref{mapc} to $C$ to
obtain a finite index subgroup $D$ of $C$
and $k>0$ where $\alpha^k\in D$ and has infinite order in the abelianisation
of $D$. Now $\alpha^k$ is certainly also in the centre of $C$,
thus by applying Corollary \ref{infa} we have that $\alpha^k$ has infinite
order in the abelianisation of $C$ too and hence so does $\alpha$.
But it is shown in \cite{rwad} that for $n\geq 4$ the abelianisation
of $C_G(\alpha)$ is finite both in $Aut$ and in $Out$ (though for
$n=2$ or 3 in both $Out$ and $Aut$, it is shown that in fact
$\alpha$ does have infinite order in the abelianisation). 
\end{proof}

\section{Quasitrees and bounded valence graphs}

Our results on groups possessing some sort of proper action on finite
products of locally finite hyperbolic graphs have necessarily insisted
that infinite order elements act loxodromically, in order to exclude
actions such as that on the hyperbolic cone where every infinite order
element is parabolic. However there are some
important classes of hyperbolic spaces where every isometry is either
elliptic or loxodromic.

A quasitree is a geodesic metric space which is complete and quasi-isometric
to a simplicial tree equipped with the path metric. By \cite{man} Section 3,
any isometry of a quasitree is indeed either elliptic or loxodromic.
We say a quasitree is locally finite if it can be given the structure
of a locally finite graph such that its metric is equal to the path metric
of this graph. Therefore we have immediately from Corollary \ref{aut}:
\begin{co} \label{qu}
Let $G$ be either the mapping class group of a closed surface of genus 3
(or of genus at least 2 and with boundary and/or at least two punctures),
or $Aut(F_n)$ or $Out((F_n)$ for $n\geq 4$.

Then $G$ has no proper isometric action on a finite product of locally
finite quasitrees; indeed in any isometric action of any finite index
subgroup on any locally finite quasitree, we have that all powers of
Dehn twists/Nielsen automorphisms act as elliptic elements.
\end{co}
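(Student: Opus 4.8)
The plan is to combine the two facts already established in the excerpt. The first is Manning's result, quoted at the top of Section 9, that any isometry of a (locally finite) quasitree is either elliptic or loxodromic; the second is Corollary \ref{aut}, which says that for $G=Aut(F_n)$ or $Out(F_n)$ with $n\geq 4$ (and the stated mapping class groups), no Nielsen automorphism (resp.\ Dehn twist) can act loxodromically in any isometric action on a finite product of locally finite hyperbolic graphs. Since a finite product of locally finite quasitrees is in particular a finite product of locally finite hyperbolic graphs (each quasitree is hyperbolic, being quasi-isometric to a tree), Corollary \ref{aut} applies verbatim to rule out a proper purely loxodromic action: a proper isometric action would force every infinite order element, and in particular the infinite order element $\gamma$ (a Dehn twist or Nielsen automorphism), to act loxodromically, contradicting Corollary \ref{aut}. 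This gives the first assertion of the corollary.

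For the second, stronger assertion I would argue about a single locally finite quasitree $T$ and a finite index subgroup $G_0$ of $G$ acting isometrically on $T$. First I would pass to the element $\gamma^m$ for the appropriate power $m$ so that $\gamma^m\in G_0$; note that $\gamma^m$ still has infinite order and is still a Dehn twist/Nielsen-type element in the relevant sense (more precisely, it is a power of such and its behaviour is governed by the centraliser results). By Manning, $\gamma^m$ acting on the quasitree $T$ is either elliptic or loxodromic, so it suffices to exclude the loxodromic case. Were $\gamma^m$ loxodromic on $T$, then $T$, being a single locally finite quasitree, is a finite (length one) product of locally finite hyperbolic graphs, and I would feed this into Corollary \ref{aut} applied to the finite index subgroup: Corollary \ref{aut} together with the centraliser arguments (Corollary \ref{mapc}, Corollary \ref{infa}, and Corollary \ref{impr}) was precisely set up so that no power of a Nielsen automorphism/Dehn twist can act loxodromically, and this conclusion is insensitive to passing to finite index subgroups because the results in \cite{rwad} and the abelianisation-of-centraliser machinery were designed to handle exactly that. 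Hence the loxodromic case cannot occur, and Manning forces $\gamma^m$, and therefore all powers of $\gamma$, to act elliptically.

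The main obstacle I expect is making rigorous the claim that Corollary \ref{aut} rules out loxodromicity not just for $G$ but for an arbitrary finite index subgroup $G_0$ and for arbitrary powers $\gamma^m$. The statement of Corollary \ref{aut} as quoted is phrased for $G$ itself, so I would need to invoke the finite-index-robustness built into Section 8: specifically, the whole point of Theorem \ref{cab} and Corollary \ref{infa} was that an infinite order central element having infinite order in the abelianisation of a finite index subgroup forces infinite order in the abelianisation of the overcentraliser, and this is what lets the argument of Corollary \ref{aut} run inside any finite index subgroup without the issue of not knowing the behaviour of $\gamma$ in the abelianisation of every finite index subgroup of its centraliser. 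Concretely, I would reprove the loxodromic-exclusion with $G$ replaced by $G_0$: take the centraliser $C_{G_0}(\gamma^m)$, apply Corollary \ref{mapc} to get a finite index $D\leq C_{G_0}(\gamma^m)$ with $\gamma^{mk}$ of infinite order in the abelianisation of $D$, then apply Corollary \ref{infa} to lift this to infinite order in the abelianisation of $C_{G_0}(\gamma^m)$, contradicting the finiteness of the abelianisation of the centraliser coming from \cite{rwad} (which holds for the ambient group and restricts to finite index subgroups since a finite abelianisation stays finite on passing to finite index). Once this finite-index version of loxodromic-exclusion is in place, the dichotomy from \cite{man} closes the argument.
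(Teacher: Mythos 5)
Your first assertion is handled essentially as the paper handles it: a finite product of locally finite quasitrees is a finite product of locally finite hyperbolic graphs, properness excludes elliptic behaviour of the infinite order element $\gamma$, and Manning's dichotomy (applied factorwise via Corollary \ref{ismprd} and Proposition \ref{type}) excludes parabolic behaviour, so $\gamma$ would act loxodromically, contradicting Corollary \ref{aut} (resp.\ Corollary \ref{mcg}). The second assertion, however, contains a genuine gap. Your contradiction rests on the parenthetical claim that the finiteness of the abelianisation of the centraliser from \cite{rwad} ``restricts to finite index subgroups since a finite abelianisation stays finite on passing to finite index.'' That is false as a statement about groups: a perfect group can have finite index subgroups with infinite abelianisation (for instance $A_5*A_5$ is perfect yet contains finite index subgroups that are free of rank greater than one). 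Indeed this is precisely the trap that Section 8 was built to avoid: Theorem \ref{cab} and Corollary \ref{infa} transfer ``infinite order in the abelianisation'' \emph{upwards} from a finite index subgroup to the ambient group (given centrality); they never transfer finiteness downwards. There is a second, related problem: your centraliser is $C_{G_0}(\gamma^m)$, so even an upwards application of \ref{cab}/\ref{infa} lands you in $C_G(\gamma^m)$, the centraliser of a \emph{power}, which contains $C_G(\gamma)$ and may a priori be strictly larger, whereas \cite{rwad} is quoted only for the centraliser $C_G(\gamma)$ of the Nielsen automorphism/Dehn twist itself.

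Both problems disappear if the reduction is run in the opposite direction, which is what makes the corollary ``immediate'' in the paper. Cleanest is induction: if $\gamma^m\in G_0$ were loxodromic on the locally finite quasitree $T$, then the induced action of $G$ on $T^{[G:G_0]}$ (the finite index closure argument of Section 5) is an isometric action on a finite product of locally finite hyperbolic graphs in which $\gamma^m$, and hence $\gamma$ itself (an element is loxodromic if and only if any nonzero power is, as noted in Section 2), acts loxodromically --- contradicting Corollary \ref{aut} (resp.\ \ref{mcg}) for the full group and the honest Dehn twist/Nielsen automorphism; Manning's dichotomy then forces ellipticity. Alternatively, you can keep your centraliser strategy but must choose the groups correctly: work with the finite index subgroup $C_G(\gamma)\cap G_0$ of $C_G(\gamma)$, which contains $\gamma^m$ as a central element and acts on $T$; Corollary \ref{mapc} produces a finite index subgroup $L$ and $k>0$ with $\gamma^{mk}$ of infinite order in the abelianisation of $L$, and then Corollary \ref{infa}, applied with ambient group $C_G(\gamma)$, promotes this to infinite order in the abelianisation of $C_G(\gamma)$, contradicting \cite{rwad} exactly where it applies.
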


The surprise here for the case of the mapping class groups is
that \cite{bebf}, extending the results of \cite{befu},
showed that without the locally finite hypothesis there
is such an action, indeed it is shown that there is a QIE action as defined
in Section 3 where it was agreed that this was a very strong type of
proper action. In particular for any Dehn twist $\gamma$
there is a finite index subgroup $H$ of the
mapping class group which contains $\gamma$ and which
has an action on a quasitree with $\gamma$ acting loxodromically.

Finally, now suppose that we consider hyperbolic graphs which are not just
locally finite but of bounded valence. It is also the case here that all
isometries are either elliptic or loxodromic. To see this, we could
recall another notion of proper for a group $G$ acting isometrically on
a metric space $X$: we say the action is {\bf uniformly metrically
proper} if for all $R>0$ there is some $N_R\in\N$ such that for any $x\in X$
the set $\{g\in G:d(g(x),x)\leq R\}$ has size at most $N_R$.
But if $g$ is an element
of infinite order acting freely on a metric graph $\Gamma$ with valence
bounded by $d$ then the action of $\langle g\rangle$
must be uniformly metrically proper on $\Gamma$ because for any vertex
$v\in V(\Gamma)$ there are only finitely many vertices at distance at
most $R$ from $v$, with this number independent of the actual vertex
chosen. We can now quote \cite{osac} Lemma 3.4 (credited to Ivanov-Olshanskii)
which says that a uniformly metrically proper action of a group on a
hyperbolic space cannot be parabolic.

Therefore we obtain just as before:
\begin{co}
Let $G$ be either the mapping class group of a closed surface of genus 3
(or of genus at least 2 and with boundary and/or at least two punctures),
or $Aut(F_n)$ or $Out(F_n)$ for $n\geq 4$.

Then $G$ has no proper isometric action on a finite product of bounded valence
hyperbolic graphs, indeed in any isometric action of any finite index
subgroup on any bounded valence hyperbolic graph, we have that all powers
of Dehn twists/Nielsen automorphisms act as elliptic elements.
\end{co}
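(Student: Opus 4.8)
The plan is to run the quasitree argument behind Corollary \ref{qu} essentially unchanged, the only extra input being the fact, prepared in the preceding paragraph, that a bounded valence hyperbolic graph admits no parabolic isometries. To close that point: a parabolic isometry $g$ has infinite order and only unbounded orbits, so no nontrivial power of $g$ can fix a vertex (a periodic vertex would give a finite, hence bounded, orbit and force $g$ to be elliptic); thus $\langle g\rangle$ acts freely on the vertex set, the bounded valence makes this action uniformly metrically proper, and \cite{osac} Lemma 3.4 rules out a uniformly metrically proper parabolic action. Hence, exactly as for a quasitree, every isometry of such a graph is elliptic or loxodromic.

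For the first assertion, suppose $G$ acted properly and isometrically on a finite product $P$ of bounded valence hyperbolic graphs and let $\gamma$ be a Dehn twist (respectively Nielsen automorphism). Replacing $\gamma$ by a power $h=\gamma^n$ lying in the factor-preserving finite index subgroup of Corollary \ref{ismprd}, Proposition \ref{type} applies to $h$ and classifies the type of $\gamma$: it cannot be elliptic on $P$, since a metrically proper action has no infinite order element with bounded orbit, and it cannot be parabolic on $P$, since that would require a parabolic action on some (bounded valence) factor, which we have just excluded. So $\gamma$ acts loxodromically on $P$, contradicting Corollary \ref{mcg} (respectively Corollary \ref{aut}). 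Therefore no such proper action exists.

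For the finite index statement, let $G_1\leq G$ have finite index and act isometrically on a single bounded valence hyperbolic graph $Y$, and let $\gamma$ be a Dehn twist or Nielsen automorphism with $\gamma^m\in G_1$ for some $m>0$. As $Y$ has no parabolics, $\gamma^m$ is elliptic or loxodromic, and I would rule out the loxodromic case. Assuming $\gamma^m$ loxodromic, set $C=C_G(\gamma)$; then $C\cap G_1=C_{G_1}(\gamma)$ has finite index in $C$, contains the element $\gamma^m$ which is central in $C$, and acts on $Y$ by automorphisms (isometries of $Y$ are automorphisms unless $Y=L$). Corollary \ref{impr} then gives a homomorphism $C\cap G_1\rightarrow\Z$ nonzero on $\gamma^m$, so $\gamma^m$ has infinite order in the abelianisation of $C\cap G_1$. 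Feeding this into Corollary \ref{infa} for the pair $C\cap G_1\leq C$ and the central element $\gamma^m$ promotes it to $\gamma^m$, and hence $\gamma$, having infinite order in the abelianisation of $C=C_G(\gamma)$. This contradicts the statement that the Dehn twist (by \cite{bri}), respectively the Nielsen automorphism (by \cite{rwad}), has finite order in the abelianisation of its centraliser. Thus $\gamma^m$, and with it every power of $\gamma$, acts elliptically.

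The step I expect to be the genuine obstacle is the finite index case, where the centraliser--abelianisation input of \cite{rwad} and \cite{bri} is available only for the ambient groups and not for their finite index subgroups; the purely group theoretic Corollary \ref{infa} is exactly what lets me transport the infinite order conclusion obtained inside $C\cap G_1$ back up to $C_G(\gamma)$. The one degenerate case to dispatch is $Y=L$, where an isometry need not be a graph automorphism, so Corollary \ref{impr} does not apply directly; there $\gamma^m$ acts as a nonzero, hence semisimple, translation of $\R$, and the CAT(0) form of the argument used in Corollary \ref{mapc} supplies an $\R$-valued homomorphism nonzero on $\gamma^m$, giving infinite order in the abelianisation of $C\cap G_1$ and feeding into Corollary \ref{infa} exactly as before.
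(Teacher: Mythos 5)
Your proof is correct and is essentially the paper's own argument: the same uniform-metric-properness reduction to \cite{osac} Lemma 3.4 rules out parabolics on bounded valence graphs (with the freeness-on-vertices point usefully spelled out), the product statement then follows from Corollaries \ref{mcg} and \ref{aut} via Proposition \ref{type}, and the finite index statement runs exactly the Section 8 centraliser machinery (Corollaries \ref{impr} and \ref{infa}) that the paper intends by ``just as before''. The only deviation is cosmetic: for the mapping class group you quote the finite-order-in-the-abelianisation-of-the-centraliser fact from \cite{bri} together with Corollary \ref{infa} rather than the unit tangent bundle argument of Corollary \ref{mcg}, a substitution the paper itself endorses in the remark preceding Corollary \ref{aut}.
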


We finish with two related questions:\\
Question 1. For $n\geq 4$, is there an isometric action of $Aut(F_n)$
or $Out(F_n)$ on a finite product of quasitrees which is QIE?

By \cite{bebf} mentioned above, we have an affirmative answer for the mapping
class group but we can still ask:\\
Question 2. For $g\geq 3$, is there a finite index subgroup of the mapping
class group of a closed orientable surface of genus $g$ which acts by
isometries on a {\it proper} hyperbolic space $X$ where some (power of some)
Dehn twist acts loxodromically?

We have shown that the answer is no if in addition $X$ is a graph.

\end{document}